\newcounter{minutes}\setcounter{minutes}{\time}
\newcounter{hours}\setcounter{hours}{\time}
\newtheorem{theorem}{Theorem}
\newtheorem{lemma}{Lemma}
\newtheorem{propo}{Proposition}
\newtheorem{corollary}{Corollary}
\newtheorem{remark}{Remark}
\title[New Hermite-Hadamard type integral inequalities  for convex functions and applications]{New Hermite-Hadamard type integral inequalities  for convex functions and theirs applications}
\author[K. Mehrez, P. Agarwal]{Khaled Mehrez and  Praveen Agarwal}
\address{Khaled Mehrez. D\'epartement de Math\'ematiques ISSAT Kasserine,
Universit\'e de Kairouan, Tunisia.}
\email{k.mehrez@yahoo.fr}
\address{Praveen Agarwal. Department of mathematics, Anand International college of engineering,Jaipur,Rajasthan, India}
\email{goyal.praveen@gmail.com}
\keywords{Hermite--Hadamard  inequality,  Integral inequalities, Convex functions, Special means,  Midpoint formula, modified Bessel function, $q-$digamma function.}
\subjclass[2010]{26D15, 26D10}
\begin{document}

\def\thefootnote{}
\footnotetext{ \texttt{File:~\jobname .tex,
          printed: \number\year-0\number\month-\number\day,
          \thehours.\ifnum\theminutes<10{0}\fi\theminutes}
} \makeatletter\def\thefootnote{\@arabic\c@footnote}\makeatother

\maketitle

\begin{abstract}
In this paper, we establish (presumably new type) integral inequalities for convex functions
via the Hermite--Hadamard's inequalities. As applications, we apply these new inequalities to construct inequalities involving special means of real numbers, some error estimates for the formula midpoint are given. Finally, new inequalities for some special and $q-$special functions  are also pointed out.
\end{abstract}

\section{\bf Introduction}
\setcounter{equation}{0}
he theory of convexity is its
 close relationship with theory of inequalities. Many inequalities known in the literature are direct consequences of the applications of convex functions. An important
inequality for convex functions which has been extensively studied in recent decades is Hermite--Hadamard's inequality, which
was obtained by Hermite and Hadamard independently.
To be more precise, a function $f:I\subseteq\mathbb{R}\longrightarrow\mathbb{R}$
is a convex function, $a , b\in I$ with $a < b,$ if and only if,
\begin{equation}\label{1}
f\left(\frac{a+b}{2}\right)\leq\frac{1}{b-a}\int_a^b f(x)dx\leq\frac{f(a)+f(b)}{2},
\end{equation}
which is known as Hermite-?adamard inequality. This inequalities (\ref{1}) has become an important cornerstone in probability and optimization. An account on the history of this inequality can be found in \cite{M}. The aim of this paper is to establish some new  results connected with the Hermite--Hadamard inequalities (\ref{1}). As application we derive   some  elementary  inequalities for real numbers, some error estimates for the formula midpoint and we established new type inequalities for the modified Bessel functions of the first and second kind and the $q-$digamma function.
\section{Some preliminary Lemmas}

In this section, we state the following Lemmas, which are useful in the proofs of our main results.

\begin{lemma}\cite{D}\label{l3}Let $f: I\subseteq\mathbb{R}\longrightarrow\mathbb{R}$ be  a  differentiable
 mapping  on $I^0,$ and $a,b\in I$ with $a<b$, then we have
\begin{equation}
\frac{f(b)+f(a)}{2}-\frac{1}{b-a}\int_a^bf(x)dx=\frac{(b-a)^2}{2}
\int_0^1 t(1-t)f^{\prime\prime}(ta+(1-t)b)dt.
\end{equation}
\end{lemma}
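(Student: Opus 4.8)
The plan is to prove this identity by evaluating the right-hand side integral via integration by parts twice, transferring the two derivatives off $f''$ so that boundary terms reproduce the left-hand side. Let me sketch the structure first and identify where the weight $t(1-t)$ and the factor $(b-a)^2/2$ come from.

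\medskip

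Write $g(t) = f(ta + (1-t)b)$, so that $g'(t) = (a-b) f'(ta+(1-t)b)$ and $g''(t) = (a-b)^2 f''(ta+(1-t)b) = (b-a)^2 f''(ta+(1-t)b)$. Thus the integrand $t(1-t) f''(ta+(1-t)b)$ equals $(b-a)^{-2} t(1-t) g''(t)$, and the right-hand side becomes $\tfrac{1}{2}\int_0^1 t(1-t) g''(t)\,dt$. First I would integrate by parts once, differentiating the polynomial $t(1-t)$ and integrating $g''$: this gives a boundary term $\tfrac12\bigl[t(1-t)g'(t)\bigr]_0^1$, which vanishes since $t(1-t)$ is zero at both endpoints, leaving $-\tfrac12\int_0^1 (1-2t) g'(t)\,dt$.

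\medskip

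Next I would integrate by parts a second time on $\int_0^1 (1-2t) g'(t)\,dt$, differentiating $(1-2t)$ and integrating $g'$. The boundary term is $\bigl[(1-2t) g(t)\bigr]_0^1 = (-1)g(1) - (1)g(0) = -g(1) - g(0)$, and the remaining integral is $-\int_0^1 (-2) g(t)\,dt = 2\int_0^1 g(t)\,dt$. Hence $\int_0^1 (1-2t)g'(t)\,dt = -g(1)-g(0) + 2\int_0^1 g(t)\,dt$, and so the right-hand side equals $-\tfrac12\bigl(-g(1)-g(0)+2\int_0^1 g(t)\,dt\bigr) = \tfrac{g(0)+g(1)}{2} - \int_0^1 g(t)\,dt$. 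Since $g(0) = f(b)$ and $g(1) = f(a)$, this reproduces $\tfrac{f(a)+f(b)}{2}$ for the boundary part.

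\medskip

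The final step is the change of variables in the remaining integral $\int_0^1 g(t)\,dt = \int_0^1 f(ta+(1-t)b)\,dt$. Substituting $x = ta + (1-t)b$, so $dx = (a-b)\,dt$ and the limits $t=0,1$ map to $x=b,a$, converts this into $\tfrac{1}{b-a}\int_a^b f(x)\,dx$, matching the integral-average term. The main obstacle — really the only delicate point — is bookkeeping the signs correctly through both integrations by parts and through the orientation-reversing substitution; the vanishing of the first boundary term relies essentially on the weight $t(1-t)$, which is precisely why this particular weight appears in the statement. No convexity is used here, only that $f$ is twice differentiable so that the manipulations are justified.
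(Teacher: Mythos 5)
Your proof is correct, and the sign bookkeeping through both integrations by parts and the orientation-reversing substitution all checks out. Note that the paper itself offers no proof of this lemma --- it is imported verbatim from the cited reference \cite{D} --- and your double-integration-by-parts argument (killing the first boundary term via the weight $t(1-t)$, recovering $\frac{f(a)+f(b)}{2}$ from the second) is precisely the standard derivation given there, so there is nothing substantive to compare. Your closing observation is also a fair correction of the statement's hypotheses: ``differentiable'' is too weak, since the identity requires $f''$ to exist and be integrable on the segment.
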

\begin{lemma}\cite{U}\label{l2}Let $f: I\subseteq\mathbb{R}\longrightarrow\mathbb{R}$ be  a  differentiable
 mapping  on $I^0,$ and $a,b\in I$ with $a<b$, then we have
\begin{equation}
\frac{1}{b-a}\int_a^b f(x)dx-f\left(\frac{a+b}{2}\right)=(b-a)\left[\int_0^{1/2}t
 f^\prime(b+(a-b)t)dt+\int_{1/2}^1(t-1) f^\prime(b+(a-b)t)dt\right].
\end{equation}
\end{lemma}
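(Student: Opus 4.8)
The plan is to start from the right-hand side and reduce it to the left-hand side by integrating each of the two integrals by parts. The key observation is that the inner composition $g(t)=f(b+(a-b)t)$ satisfies $g'(t)=(a-b)f'(b+(a-b)t)$, so that $\frac{1}{a-b}f(b+(a-b)t)$ is an antiderivative of $f'(b+(a-b)t)$ with respect to $t$. This makes integration by parts against the polynomial factors $t$ and $t-1$ completely routine, and it is the only tool the proof needs.

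First I would treat $I_1=\int_0^{1/2}t\,f'(b+(a-b)t)\,dt$. Choosing $u=t$ and $dv=f'(b+(a-b)t)\,dt$, the integration by parts produces a boundary contribution equal to $\frac{1}{2(a-b)}f\left(\frac{a+b}{2}\right)$ (the term at $t=0$ vanishes) minus $\frac{1}{a-b}\int_0^{1/2}f(b+(a-b)t)\,dt$. Next I would handle $I_2=\int_{1/2}^1(t-1)f'(b+(a-b)t)\,dt$ in the same fashion; this time the factor $t-1$ vanishes at the upper endpoint $t=1$, so the only surviving boundary term comes from $t=1/2$ and again equals $\frac{1}{2(a-b)}f\left(\frac{a+b}{2}\right)$, leaving $-\frac{1}{a-b}\int_{1/2}^1 f(b+(a-b)t)\,dt$.

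Adding $I_1$ and $I_2$, the two boundary contributions combine into $\frac{1}{a-b}f\left(\frac{a+b}{2}\right)$, while the two leftover integrals merge into the single integral $-\frac{1}{a-b}\int_0^1 f(b+(a-b)t)\,dt$ over the full interval $[0,1]$. The final step is the substitution $x=b+(a-b)t$, which converts $\int_0^1 f(b+(a-b)t)\,dt$ into $\frac{1}{b-a}\int_a^b f(x)\,dx$ once the reversed orientation of the limits is accounted for. Multiplying the resulting expression for $I_1+I_2$ by the prefactor $(b-a)$ and using $\frac{b-a}{a-b}=-1$ then yields precisely $\frac{1}{b-a}\int_a^b f(x)\,dx-f\left(\frac{a+b}{2}\right)$, which is the claimed identity.

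There is no genuine analytic difficulty here; the one place to be careful is the sign and factor bookkeeping. The antiderivative repeatedly introduces the factor $\frac{1}{a-b}$, and this must be tracked against the outer multiplication by $(b-a)$. Moreover, the substitution $x=b+(a-b)t$ sends $t=0,1$ to $x=b,a$, so the limits run from $b$ down to $a$; the resulting orientation reversal is exactly what turns $\frac{1}{a-b}$ into $\frac{1}{b-a}$ and fixes the final sign. Keeping these signs consistent is the only step where a mistake is at all likely.
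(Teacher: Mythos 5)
Your proof is correct, and every step checks out: the antiderivative $\tfrac{1}{a-b}f(b+(a-b)t)$, the two boundary contributions of $\tfrac{1}{2(a-b)}f\left(\tfrac{a+b}{2}\right)$ at $t=\tfrac12$, the merged integral $-\tfrac{1}{a-b}\int_0^1 f(b+(a-b)t)\,dt$, and the orientation reversal under $x=b+(a-b)t$ that converts it to $\tfrac{1}{(b-a)^2}\int_a^b f(x)\,dx$. Note that the paper itself gives no proof of this lemma --- it is quoted from the reference [U] (Kirmaci--Ozdemir) --- and your integration-by-parts argument is exactly the standard proof given there, so there is nothing to add.
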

\begin{lemma}\label{TT1}Let $f:I\subseteq\mathbb{R}\longrightarrow\mathbb{R}$
be a  differentiable  mapping  on $I^{0}$ with $a<b.$ If $f$ is a convex function,  then the following inequalities holds:
\begin{equation}\label{K1}
f\left(\frac{a+b}{2}\right)\leq \frac{1}{b-a}\int_a^b f(x)dx
\leq\frac{2f\left(\frac{a+b}{2}\right)+f\left(\frac{3b-a}{2}\right)+
f\left(\frac{3a-b}{2}\right)}{4},
\end{equation}
and
\begin{equation}\label{K2}
\left|\frac{1}{b-a}\int_a^b f(x)dx-\frac{f\left(\frac{a+b}{2}\right)}{2}\right|\leq
\left|\frac{
f\left(\frac{3b-a}{2}\right)+
f\left(\frac{3a-b}{2}\right)}{4}\right|
\end{equation}
\end{lemma}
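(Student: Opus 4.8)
The plan is to obtain both displays from the scalar Hermite--Hadamard inequality \eqref{1} together with the convexity of $f$ alone; Lemmas \ref{l3} and \ref{l2} do not seem to be needed here. Throughout I would write $m=\frac{a+b}{2}$ and $h=\frac{b-a}{2}$, so that the outer abscissae appearing on the right are $\frac{3a-b}{2}=m-2h$ and $\frac{3b-a}{2}=m+2h$, and the five points $m-2h,\,a,\,m,\,b,\,m+2h$ are equally spaced. With this notation the left-hand inequality in \eqref{K1} is nothing but the left half of \eqref{1}, so no separate argument is required.

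For the right-hand inequality in \eqref{K1} I would split the integral at the midpoint and apply the upper half of \eqref{1} on each piece. Since $m-a=b-m=h$,
\begin{equation*}
\frac{1}{b-a}\int_a^b f(x)\,dx=\frac12\left[\frac1h\int_a^m f+\frac1h\int_m^b f\right]\leq\frac12\left[\frac{f(a)+f(m)}{2}+\frac{f(m)+f(b)}{2}\right]=\frac{f(a)+2f(m)+f(b)}{4}.
\end{equation*}
It then remains to absorb the endpoint values into the outer ones, i.e.\ to establish the ``spreading'' estimate $f(a)+f(b)\leq f(m-2h)+f(m+2h)$. This is the one genuinely technical point of \eqref{K1}, and it follows from convexity once each endpoint is written as a convex combination of the outer abscissae: one checks $a=\frac34(m-2h)+\frac14(m+2h)$ and $b=\frac14(m-2h)+\frac34(m+2h)$, whence $f(a)\leq\frac34 f(m-2h)+\frac14 f(m+2h)$ and $f(b)\leq\frac14 f(m-2h)+\frac34 f(m+2h)$; adding the two and combining with the displayed bound completes \eqref{K1}.

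For \eqref{K2} I would subtract $\tfrac12 f(m)$ throughout \eqref{K1}. Writing $A=\frac{1}{b-a}\int_a^b f$ and $S=f(\frac{3a-b}{2})+f(\frac{3b-a}{2})$, the chain $f(m)\leq A\leq\frac{2f(m)+S}{4}$ becomes
\begin{equation*}
\frac{f(m)}{2}\leq A-\frac{f(m)}{2}\leq\frac{S}{4},
\end{equation*}
so the upper bound $A-\tfrac12 f(m)\leq\tfrac14 S\leq\bigl|\tfrac14 S\bigr|$ is immediate. Convexity at the midpoint, $2f(m)\leq S$ (as $m$ is the midpoint of $m\pm2h$), moreover guarantees that the displayed interval is nonempty.

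The delicate point — and the step I expect to be the real obstacle — is the matching lower bound needed to pass to the absolute value on the left of \eqref{K2}. The display only yields $A-\tfrac12 f(m)\geq\tfrac12 f(m)$, so the estimate $\bigl|A-\tfrac12 f(m)\bigr|\leq\bigl|\tfrac14 S\bigr|$ is forced only when $f(m)\geq0$ (most comfortably when $f\geq0$ on the relevant range, which holds in all the special-function applications of the paper). Without such a positivity hypothesis the left-hand absolute value can exceed $|S/4|$: for a convex $f$ that equals a negative constant on $[a,b]$ and rises to the value $0$ at the outer points $\frac{3a-b}{2},\frac{3b-a}{2}$, the left side of \eqref{K2} is strictly positive while the right side vanishes. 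I would therefore either add the standing assumption $f\geq0$, or state \eqref{K2} without the outer absolute values, in which case it is an exact consequence of \eqref{K1} together with the midpoint bound $2f(m)\leq S$.
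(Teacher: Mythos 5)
Your proof of \eqref{K1} is correct, but it takes a genuinely different route from the paper's. The paper proves the right-hand inequality by the change of variable $x=\frac34 t+\frac{a+b}{4}$, which rewrites $\frac{1}{b-a}\int_a^b f(x)\,dx$ as an average of $f\bigl(\frac12\bigl(\frac32 t\bigr)+\frac12\cdot\frac{a+b}{2}\bigr)$; it then applies convexity pointwise under the integral sign to split off the term $\frac12 f\bigl(\frac{a+b}{2}\bigr)$ (inequality \eqref{PP}), and finally applies the right half of \eqref{1} on the dilated interval $[\frac{3a-b}{2},\frac{3b-a}{2}]$ (inequality \eqref{QQ}). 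You instead bisect $[a,b]$ at the midpoint, apply the right half of \eqref{1} on each half to get the bound $\frac{f(a)+2f(\frac{a+b}{2})+f(b)}{4}$, and then push $f(a)+f(b)$ outward to $f\bigl(\frac{3a-b}{2}\bigr)+f\bigl(\frac{3b-a}{2}\bigr)$ by writing $a$ and $b$ as convex combinations of the outer abscissae. Both arguments are valid and of comparable length; yours avoids any change of variables and is more elementary, but the paper's route produces the intermediate estimate \eqref{PP}, which is not a throwaway: it is reused later in the proofs of the main results (see \eqref{M} and \eqref{maa}), so within the paper's architecture \eqref{PP} is the real content of this lemma, not just a stepping stone to \eqref{K1}.

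On \eqref{K2} your diagnosis is exactly right, and it exposes a genuine defect in the paper. The paper's proof of \eqref{K2} is the single assertion that it follows ``in view of \eqref{PP} and \eqref{QQ}''; but those displays only give the one-sided estimate $\frac{1}{b-a}\int_a^b f(x)\,dx-\frac12 f\bigl(\frac{a+b}{2}\bigr)\le\frac{S}{4}$, where $S=f\bigl(\frac{3a-b}{2}\bigr)+f\bigl(\frac{3b-a}{2}\bigr)$, and no matching lower bound $\ge -\bigl|\frac{S}{4}\bigr|$ is ever established --- precisely the step you identify as the obstacle. Such a bound indeed cannot hold in general: besides your piecewise-linear example (which works after smoothing), one can take a smooth counterexample outright, e.g.\ $f(x)=\frac{x^2}{4}-1$ with $[a,b]=[-1,1]$, for which the outer points are $\pm2$, the left side of \eqref{K2} equals $\frac{5}{12}$, and the right side is $0$. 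So \eqref{K2} as stated is false, the paper's ``proof'' of it is an unjustified assertion, and your proposed repairs (assume $f\ge0$, which holds in the paper's special-function applications, or remove the absolute value from the left-hand side) are the correct ways to salvage it.
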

\begin{proof} By used the  change  of  the  variable   $x=\frac{3}{4}t+\frac{a+b}{4},\;t\in[\frac{3a-b}{3},\frac{3b-a}{3}]$ we get
\begin{equation}\label{PP}
\begin{split}
\frac{1}{b-a}\int_a^b f(x)dx&=\frac{3}{4(b-a)}\int_{\frac{3a-b}{3}}^{\frac{3b-a}{3}} f(\frac{3}{4}t+\frac{a+b}{4})dx\\
&=\frac{3}{4(b-a)}\int_{\frac{3a-b}{3}}^{\frac{3b-a}{3}}
f\left(\frac{1}{2}\left(\frac{3}{2}t\right)+\frac{1}{2}\left(\frac{a+b}{2}\right)\right)dx\\
&\leq\frac{3}{8(b-a)}\int_{\frac{3a-b}{3}}^{\frac{3b-a}{3}}
\left[f\left(\frac{3}{2}t\right)+f\left(\frac{a+b}{2}\right)\right]dx\\
&=\frac{f\left(\frac{a+b}{2}\right)}{2}+\frac{1}{4(b-a)}
\int_{\frac{3a-b}{2}}^{\frac{3b-a}{2}}f(t)dt.
\end{split}
\end{equation}
From the right hand side of inequality (\ref{1}), we have
\begin{equation}\label{QQ}
\frac{1}{4(b-a)}\int_{\frac{3a-b}{2}}^{\frac{3b-a}{2}}f(t)dt\leq
\frac{f\left(\frac{3a-b}{2}\right)+f\left(\frac{3b-a}{2}\right)}{4}.
\end{equation}
In view of (\ref{PP}) and (\ref{QQ}), we deduce that the inequalities (\ref{K1}) and (\ref{K2}) holds true.
\end{proof}
\section{Main results}
\begin{theorem}\label{TTTT}Let $f:I\subseteq\mathbb{R}\longrightarrow\mathbb{R}$
be a  differentiable  mapping  on $I^{0}$ with $a<b,$ and its derivative $f^\prime:[\frac{3a-b}{2},\frac{3b-a}{2}]\longrightarrow\mathbb{R},$ be a continuous on $[\frac{3a-b}{2},\frac{3b-a}{2}]$. Let $q\geq1,$ if $|f^\prime|^q$ is a convex function on $[\frac{3a-b}{2},\frac{3b-a}{2}]$, then the following inequality holds
\begin{equation}
\left|\frac{1}{b-a}\int_a^b f(x)dx-f\left(\frac{a+b}{2}\right)\right|\leq\frac{b-a}{8}\left(
\left|f^\prime\left(\frac{3a-b}{2}\right)\right|^q+
\left|f^\prime\left(\frac{3b-a}{2}\right)\right|^q\right)^{1/q}.
\end{equation}
\end{theorem}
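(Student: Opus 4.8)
The plan is to start from the midpoint identity of Lemma~\ref{l2}. Taking absolute values inside the integrals and using $|t-1|=1-t$ on $[\tfrac12,1]$ together with the triangle inequality gives
\begin{equation*}
\left|\frac{1}{b-a}\int_a^b f(x)dx-f\left(\frac{a+b}{2}\right)\right|\leq (b-a)\left[\int_0^{1/2}t\,\left|f'(b+(a-b)t)\right|dt+\int_{1/2}^1(1-t)\,\left|f'(b+(a-b)t)\right|dt\right].
\end{equation*}
The two elementary weights satisfy $\int_0^{1/2}t\,dt=\int_{1/2}^1(1-t)dt=\frac18$, and it is these values that must ultimately produce the constant $\frac{b-a}{8}$.

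The decisive step is to see how the endpoints of the enlarged interval $\left[\frac{3a-b}{2},\frac{3b-a}{2}\right]$ enter. Although $b+(a-b)t$ ranges only over $[a,b]$ as $t$ runs over $[0,1]$, a short computation identifies it as the convex combination
\begin{equation*}
b+(a-b)t=\frac{2t+1}{4}\cdot\frac{3a-b}{2}+\frac{3-2t}{4}\cdot\frac{3b-a}{2},
\end{equation*}
whose coefficients are nonnegative and sum to $1$ on $[0,1]$. Since $|f'|^q$ is convex on $\left[\frac{3a-b}{2},\frac{3b-a}{2}\right]$, this furnishes the pointwise estimate
\begin{equation*}
\left|f'(b+(a-b)t)\right|^q\leq\frac{2t+1}{4}\left|f'\left(\frac{3a-b}{2}\right)\right|^q+\frac{3-2t}{4}\left|f'\left(\frac{3b-a}{2}\right)\right|^q,
\end{equation*}
which is precisely what routes the estimate to the derivative values at $\frac{3a-b}{2}$ and $\frac{3b-a}{2}$.

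Finally I would invoke the power-mean inequality for $q\geq1$, using $\frac18$ as the normalizing weight factor, and insert the convexity bound. The elementary integrals collapse cleanly: with $w(t)=t$ on $[0,\tfrac12]$ and $w(t)=1-t$ on $[\tfrac12,1]$ one has $\int_0^1 w(t)\,\frac{2t+1}{4}dt=\int_0^1 w(t)\,\frac{3-2t}{4}dt=\frac18$, so the weighted integral of $|f'(b+(a-b)t)|^q$ is at most $\frac18\left(\left|f'\left(\frac{3a-b}{2}\right)\right|^q+\left|f'\left(\frac{3b-a}{2}\right)\right|^q\right)$. Carrying this through the $q$-th root and the outer factor $(b-a)$ yields the asserted bound. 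The one genuinely delicate point is the bookkeeping of constants across the power-mean step: one must keep the exponents $1-\frac1q$ and $\frac1q$ attached to the correct factors so that the weight and the $q$-th root combine into exactly $\frac{b-a}{8}$ rather than a larger constant; the convexity reduction and the integrations themselves are entirely routine.
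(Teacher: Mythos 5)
Your setup is sound and is actually a different (and in two respects cleaner) route than the paper's: you apply Lemma~\ref{l2} on $[a,b]$ itself and transport the estimate to the enlarged endpoints via the convex-combination identity $b+(a-b)t=\frac{2t+1}{4}\cdot\frac{3a-b}{2}+\frac{3-2t}{4}\cdot\frac{3b-a}{2}$, whereas the paper applies Lemma~\ref{l2} on $[\frac{3a-b}{2},\frac{3b-a}{2}]$ and splices it with (\ref{PP}); your version gives a genuine absolute-value bound and never needs convexity of $f$ itself, both of which are issues in the paper's inequality (\ref{M}). However, the step you yourself flag as ``delicate'' is exactly where the argument breaks, and it cannot be repaired by bookkeeping. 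Write $w(t)=t$ on $[0,\frac12]$, $w(t)=1-t$ on $[\frac12,1]$, $A=|f'(\frac{3a-b}{2})|^q$, $B=|f'(\frac{3b-a}{2})|^q$. The correct normalizing mass for the power-mean (weighted H\"older) inequality over $[0,1]$ is $\int_0^1 w(t)\,dt=\frac14$, not $\frac18$, so what your computation actually yields is
\begin{equation*}
\int_0^1 w(t)\left|f'(b+(a-b)t)\right|dt\leq\left(\tfrac14\right)^{1-\frac1q}\left(\tfrac{A+B}{8}\right)^{\frac1q}=\frac14\left(\frac{A+B}{2}\right)^{\frac1q}=2^{1-\frac1q}\cdot\frac18\left(A+B\right)^{\frac1q},
\end{equation*}
which is strictly larger than the asserted $\frac18(A+B)^{1/q}$ for every $q>1$ (they agree only at $q=1$). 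Using $\frac18$ as the mass for the full integral is simply an invalid application of H\"older. Splitting into the two halves first (each of mass $\frac18$) does not help either: you then get $(\frac18)^{1-1/q}\bigl(x_1^{1/q}+x_2^{1/q}\bigr)$ with $x_1+x_2\leq\frac{A+B}{8}$, and since $x_1^{1/q}+x_2^{1/q}\geq(x_1+x_2)^{1/q}$ for $q\geq1$, this sum sits \emph{above} $\frac18(A+B)^{1/q}$, not below it.

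The deeper problem is that no regrouping can close this gap, because the stated inequality is false for large $q$; only the constant $\frac{b-a}{4}\bigl(\frac{A+B}{2}\bigr)^{1/q}$ that your method genuinely produces is attainable this way. Indeed, on $[a,b]=[0,1]$ take $f(x)=\frac{q}{q+1}\left|x-\frac12\right|^{(q+1)/q}$, so that $f'(x)=\mathrm{sgn}\left(x-\frac12\right)\left|x-\frac12\right|^{1/q}$ is continuous and $|f'(x)|^q=\left|x-\frac12\right|$ is convex on $[-\frac12,\frac32]$, with $A=B=1$. Then $f\left(\frac12\right)=0$ and the left-hand side equals $\frac{q^2}{2(q+1)(2q+1)}2^{-1/q}$, while the claimed right-hand side is $\frac{2^{1/q}}{8}$; the ratio is $\frac{4q^2}{(q+1)(2q+1)}2^{-2/q}$, which already exceeds $1$ at $q=4$ and tends to $2$ as $q\to\infty$. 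You should also know that the paper's own proof has precisely the same defect: its inequalities (\ref{MM}) and (\ref{MMM}) sum, by the same superadditivity of $x\mapsto x^{1/q}$, to a quantity at least $\frac18(A+B)^{1/q}$, so the concluding sentence ``completes the proof'' is unjustified there as well. Both arguments establish the theorem only in the case $q=1$; for $q>1$ your approach (with the mass $\frac14$ put in correctly) proves the weaker, but correct, bound $\frac{b-a}{4}\left(\frac{A+B}{2}\right)^{1/q}$.
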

\begin{proof}By means of Lemma \ref{l2}, we have
\begin{equation}\label{yy}
\frac{1}{2(b-a)}\int_{\frac{3a-b}{2}}^{\frac{3b-a}{2}} f(t)dt =f\left(\frac{a+b}{2}\right)+2(b-a)
\left[\int_0^{\frac{1}{2}} tf^\prime\left(\frac{3b-a}{2}+2(a-b)t\right)dt+
\int_{\frac{1}{2}}^1(t-1)f^\prime\left(\frac{3b-a}{2}+2(a-b)t\right)dt\right].
\end{equation}
From (\ref{PP}) and (\ref{yy}), we get
\begin{equation}\label{M}
\frac{1}{b-a}\int_a^bf(x)dx-f\left(\frac{a+b}{2}\right)\leq (b-a)\left[\int_0^{\frac{1}{2}} t\left|
f^\prime\left(\frac{3b-a}{2}+2(a-b)t\right)\right|dt+
\int_{\frac{1}{2}}^1(1-t)\left|f^\prime\left(\frac{3b-a}{2}+2(a-b)t\right)\right|dt\right].
\end{equation}
By the  power-mean  inequality, we find
\begin{equation}\label{MM}
\begin{split}
\int_0^{1/2}t\left|
f^\prime\left(\frac{3b-a}{2}+2(a-b)t\right)\right|dt&=\int_0^{1/2}t\left|f^\prime
\left(t(\frac{3a-b}{2})+(1-t)(\frac{3b-a}{2})\right)\right|dt\\
&\leq \left(\int_0^{1/2}t\;dt\right)^{1-1/q}\left(\int_0^{1/2}t\left|f^\prime
\left(t(\frac{3a-b}{2})+(1-t)(\frac{3b-a}{2})\right)\right|^q dt\right)^{1/q}\\
&\leq \left(\frac{1}{8}\right)^{1-1/q}\left(\int_0^{1/2}t^2\left|f^\prime(\frac{3a-b}{2})
\right|^q+t(1-t)\left|f(\frac{3b-a}{2})\right|^q\right)^{1/q}\\
&=\left(\frac{1}{8}\right)^{1-1/q}\left(
\frac{\left|f^\prime(\frac{3a-b}{2})\right|^q}{24}+\frac{\left|f(\frac{3b-a}{2})\right|^q}{12}\right)^{1/q}
\end{split}
\end{equation}
In the same way, we get
\begin{equation}\label{MMM}
\int_{\frac{1}{2}}^1(1-t)\left|f^\prime\left(\frac{3b-a}{2}+2(a-b)t\right)\right|dt\leq
\left(\frac{1}{8}\right)^{1-1/q}\left(\frac{\left|f^\prime(\frac{3a-b}{2})\right|^q}{12}+
\frac{\left|f(\frac{3b-a}{2})\right|^q}{24}\right)^{1/q}
\end{equation}
So, (\ref{M}), (\ref{MM}) and (\ref{MMM}) completes the proof of this Theorem.
\end{proof}
\begin{theorem}\label{1TTTT}Let $f:I\subseteq\mathbb{R}\longrightarrow\mathbb{R}$
be a  differentiable  mapping  on $I^{0}$ with $a<b,$ and its derivative $f^\prime:[\frac{3a-b}{2},\frac{3b-a}{2}]\longrightarrow\mathbb{R},$ be a continuous on $[\frac{3a-b}{2},\frac{3b-a}{2}]$. Let $q\geq1,$ if $|f^\prime|^q$ is a convex function on $[\frac{3a-b}{2},\frac{3b-a}{2}]$, then the following inequality holds
\begin{equation}
\left|\frac{1}{b-a}\int_a^b f(x)dx-f\left(\frac{a+b}{2}\right)\right|\leq(b-a)
\left(\frac{1}{2^{p+1}(p+1)}\right)^{\frac{1}{p}}\left(\frac{
\left|f^\prime\left(\frac{3a-b}{2}\right)\right|^q+
\left|f^\prime\left(\frac{3b-a}{2}\right)\right|^q}{2}\right)^{1/q}.
\end{equation}
\end{theorem}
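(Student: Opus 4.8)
The plan is to reuse the proof of Theorem~\ref{TTTT} unchanged up to the key reduction, and to replace the power-mean inequality by Hölder's inequality with conjugate exponents $p,q$, $\frac1p+\frac1q=1$ (so $p=\frac{q}{q-1}$ for $q>1$). Concretely, Lemma~\ref{l2} applied on $[\frac{3a-b}{2},\frac{3b-a}{2}]$ together with (\ref{PP}) already yields (\ref{M}),
\[
\frac{1}{b-a}\int_a^b f(x)\,dx-f\!\left(\frac{a+b}{2}\right)\leq (b-a)\left[\int_0^{1/2}t\,\left|f'(u(t))\right|\,dt+\int_{1/2}^1(1-t)\,\left|f'(u(t))\right|\,dt\right],
\]
where $u(t)=\frac{3b-a}{2}+2(a-b)t=t\,\frac{3a-b}{2}+(1-t)\,\frac{3b-a}{2}$; this reduction requires no new work.

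First I would apply Hölder to each of the two integrals. For the first,
\[
\int_0^{1/2}t\,\left|f'(u(t))\right|\,dt\leq\left(\int_0^{1/2}t^{p}\,dt\right)^{1/p}\left(\int_0^{1/2}\left|f'(u(t))\right|^{q}\,dt\right)^{1/q},
\]
and the weight integral evaluates to $\int_0^{1/2}t^{p}\,dt=\frac{1}{2^{p+1}(p+1)}$, which is exactly the announced factor $\left(\frac{1}{2^{p+1}(p+1)}\right)^{1/p}$ after taking the $p$-th root. The substitution $t\mapsto 1-t$ gives $\int_{1/2}^1(1-t)^{p}\,dt=\frac{1}{2^{p+1}(p+1)}$ as well, so the second integral carries the identical $p$-factor. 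For the remaining $q$-factors I would use that $u(t)$ traverses the chord from $\frac{3b-a}{2}$ (at $t=0$) to $\frac{3a-b}{2}$ (at $t=1$), whence convexity of $|f'|^q$ gives the termwise-integrable bound $\left|f'(u(t))\right|^{q}\leq t\,\left|f'\!\left(\frac{3a-b}{2}\right)\right|^{q}+(1-t)\,\left|f'\!\left(\frac{3b-a}{2}\right)\right|^{q}$.

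The main obstacle is the final assembly into the symmetric mean. Writing $A=\left|f'\!\left(\frac{3a-b}{2}\right)\right|^{q}$, $B=\left|f'\!\left(\frac{3b-a}{2}\right)\right|^{q}$, and $U=\int_0^{1/2}|f'(u)|^q\,dt$, $V=\int_{1/2}^1|f'(u)|^q\,dt$, the two Hölder estimates factor out the common constant and leave me to control $U^{1/q}+V^{1/q}$, where $U+V=\int_0^1|f'(u)|^q\,dt\leq\frac{A+B}{2}$. Collapsing $U^{1/q}+V^{1/q}$ onto $\left(\frac{A+B}{2}\right)^{1/q}$ is the delicate point: the elementary inequality $x^{1/q}+y^{1/q}\geq(x+y)^{1/q}$ for $q>1$ runs the opposite way, so the target form is reached exactly only at $q=1$, and for $q>1$ one must either absorb a $2^{1-1/q}$-type factor into the constant or, more cleanly, combine the two half-ranges into a single integral over $[0,1]$ before invoking Hölder. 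Making this assembly land on precisely the stated constant is the step I expect to demand the most care.
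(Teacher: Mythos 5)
Your proposal is, up to the final step, exactly the paper's own proof: the same reduction to (\ref{M}), the same H\"older application on each half-range with weights $t^p$ and $(1-t)^p$ (both weight integrals equal $\tfrac{1}{2^{p+1}(p+1)}$, as you say), and the same convexity bound $|f'(u(t))|^q\leq tA+(1-t)B$, where $A=\bigl|f'\bigl(\tfrac{3a-b}{2}\bigr)\bigr|^q$ and $B=\bigl|f'\bigl(\tfrac{3b-a}{2}\bigr)\bigr|^q$; these are precisely the paper's displays (\ref{1267}) and (\ref{1268}), which end with the asymmetric means $\tfrac{A+3B}{8}$ and $\tfrac{3A+B}{8}$.

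The ``delicate point'' you flag at the end is not a failure of your reading --- it is a genuine hole, and it is exactly where the paper's own proof breaks down. The paper simply asserts that (\ref{M}), (\ref{1267}) and (\ref{1268}) ``complete the proof,'' but they do not: adding (\ref{1267}) and (\ref{1268}) produces the quantity
\begin{equation*}
\left[\frac{A+3B}{8}\right]^{\frac{1}{q}}+\left[\frac{3A+B}{8}\right]^{\frac{1}{q}},
\end{equation*}
and at $A=B$ this equals $2^{1-1/q}A^{1/q}$, which is strictly larger than the claimed factor $\left(\frac{A+B}{2}\right)^{1/q}=A^{1/q}$ for every $q>1$. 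So the stated inequality does not follow from the displayed estimates, and no rearrangement of them can recover it, since the defect occurs at the symmetric point. Your two suggested repairs both quantify the loss precisely: concavity of $x\mapsto x^{1/q}$ (Jensen) applied to the two terms, or equivalently your single-integral H\"older with weight $\min(t,1-t)$, whose $p$-th power integrates to $\tfrac{1}{2^{p}(p+1)}$ over $[0,1]$, yields the constant $\left(\frac{1}{2^{p}(p+1)}\right)^{1/p}=2^{1/p}\left(\frac{1}{2^{p+1}(p+1)}\right)^{1/p}$. In other words, what this route actually proves is the theorem with $2^{p}$ in place of $2^{p+1}$; the constant as printed is established neither by your proposal nor by the paper, and the extra factor $2^{1/p}$ you identified cannot be absorbed.
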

\begin{proof} By again the H\"older's inequality we have
\begin{equation}\label{1267}
\begin{split}
\int_0^{1/2}t \left|f^\prime\left(t(\frac{3a-b}{2})+(1-t)(\frac{3b-a}{2})\right)\right|dt&\leq \left[\int_0^{1/2}t^pdt\right]^{\frac{1}{p}} \left[\int_0^{1/2}\left|f^\prime\left(t(\frac{3a-b}{2})+(1-t)(\frac{3b-a}{2})\right)\right|^q dt\right]^{\frac{1}{q}}\\
&\leq\left[\frac{1}{(p+1)2^{p+1}}\right]^{\frac{1}{p}}\left[\left|f^\prime(\frac{3a-b}{2})\right|^q\int_0^{1/2}t dt+\left|f^\prime(\frac{3b-a}{2})\right|^q\int_0^{1/2}(1-t) dt\right]^{\frac{1}{q}}\\
&=\left[\frac{1}{(p+1)2^{p+1}}\right]^{\frac{1}{p}}\left[\frac{\left|f^\prime(\frac{3a-b}{2})\right|^q+3\left|f^\prime(\frac{3b-a}{2})\right|^q}{8}\right]^{\frac{1}{q}}.
\end{split}
\end{equation}
Similarly, we get
\begin{equation}\label{1268}
\begin{split}
\int_{1/2}^1(1-t) \left|f^\prime\left(t(\frac{3a-b}{2})+(1-t)(\frac{3b-a}{2})\right)\right|dt&\leq \left[\int_0^{1/2}t^pdt\right]^{\frac{1}{p}} \left[\int_{1/2}^1\left|f^\prime\left(t(\frac{3a-b}{2})+(1-t)(\frac{3b-a}{2})\right)\right|^q dt\right]^{\frac{1}{q}}\\
&\leq\left[\frac{1}{(p+1)2^{p+1}}\right]^{\frac{1}{p}}\left[\left|f^\prime(\frac{3a-b}{2})\right|^q\int_{1/2}^1t dt+\left|f^\prime(\frac{3b-a}{2})\right|^q\int_{1/2}^1(1-t) dt\right]^{\frac{1}{q}}\\
&=\left[\frac{1}{(p+1)2^{p+1}}\right]^{\frac{1}{p}}\left[\frac{3\left|f^\prime(\frac{3a-b}{2})\right|^q+\left|f^\prime(\frac{3b-a}{2})\right|^q}{8}\right]^{\frac{1}{q}}.
\end{split}
\end{equation}
Thus, the inequalities (\ref{M}), (\ref{1267}) and (\ref{1268}) completes the proof of this Theorem.
\end{proof}
\begin{corollary}\label{c1}From  Theorem 2--3 we get the following inequality for $q > 1$ we get
$$\left|\frac{1}{b-a}\int_a^b f(x)dx-f\left(\frac{a+b}{2}\right)\right|\leq\min\{K_1,K_2\}(b-a)\left(
\left|f^\prime\left(\frac{3a-b}{2}\right)\right|^q+
\left|f^\prime\left(\frac{3b-a}{2}\right)\right|^q\right)^{1/q}$$
where $K_1=\frac{1}{8}$ and $K_2=\left(\frac{1}{(p+1)2^{p+1+
\frac{1}{pq}}}\right)^{\frac{1}{p}},$ such that $p=\frac{q}{q-1}.$
\end{corollary}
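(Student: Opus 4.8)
The plan is to recognize that Corollary \ref{c1} is nothing more than the simultaneous use of the two upper bounds already established in Theorems \ref{TTTT} and \ref{1TTTT}. Since both inequalities bound the \emph{same} left-hand quantity $\left|\frac{1}{b-a}\int_a^b f(x)dx-f\left(\frac{a+b}{2}\right)\right|$, that quantity is automatically bounded by the smaller of the two right-hand sides. The only genuine task is to rewrite both bounds so that they display the common factor $\left(\left|f^\prime\left(\frac{3a-b}{2}\right)\right|^q+\left|f^\prime\left(\frac{3b-a}{2}\right)\right|^q\right)^{1/q}$, and then to read off the two leading constants as $K_1$ and $K_2$.

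First I would handle Theorem \ref{TTTT}, which requires no work at all: its right-hand side is already in exactly the target form, with leading constant $\frac{1}{8}$, so this contributes the constant $K_1=\frac18$ directly.

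Second, for Theorem \ref{1TTTT} I would normalize the power mean by extracting the factor $\left(\tfrac12\right)^{1/q}=2^{-1/q}$ from $\left(\frac{|f^\prime(\frac{3a-b}{2})|^q+|f^\prime(\frac{3b-a}{2})|^q}{2}\right)^{1/q}$, leaving the bare sum $\left(|f^\prime(\frac{3a-b}{2})|^q+|f^\prime(\frac{3b-a}{2})|^q\right)^{1/q}$ as the common factor. The remaining scalar coefficient $\left(\frac{1}{2^{p+1}(p+1)}\right)^{1/p}\cdot 2^{-1/q}$ must then be consolidated into a single $p$-th root by writing $2^{-1/q}=\left(2^{-p/q}\right)^{1/p}$ and merging the powers of $2$; after substituting $p=\frac{q}{q-1}$ this yields the constant $K_2$. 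I expect the only mildly delicate point to be the exponent bookkeeping when the two powers of $2$ are combined, so I would verify that step carefully against the stated form of $K_2$.

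Finally, having expressed both bounds as $C\,(b-a)\left(|f^\prime(\frac{3a-b}{2})|^q+|f^\prime(\frac{3b-a}{2})|^q\right)^{1/q}$ with $C=K_1$ and $C=K_2$ respectively, I would conclude that the left-hand side is at most $\min\{K_1,K_2\}$ times that common factor, which is precisely the claimed inequality. There is no substantive obstacle here: the result is a direct combination and algebraic repackaging of two theorems already proved, and the argument is complete once the coefficient simplification for $K_2$ is confirmed.
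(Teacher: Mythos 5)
Your overall strategy is exactly the paper's (implicit) proof: the paper offers no argument beyond citing Theorems \ref{TTTT}--\ref{1TTTT}, and combining the two bounds and taking the minimum of the two constants is indeed all there is to do. The problem is that the one step you explicitly deferred --- ``verify the exponent bookkeeping against the stated form of $K_2$'' --- is precisely the step that fails. Carrying out your own merge:
$$\left(\frac{1}{2^{p+1}(p+1)}\right)^{\frac{1}{p}}\cdot 2^{-\frac{1}{q}}
=\left(\frac{1}{(p+1)\,2^{\,p+1+\frac{p}{q}}}\right)^{\frac{1}{p}},$$
and since $\frac{1}{p}+\frac{1}{q}=1$ gives $\frac{p}{q}=p-1$, the exponent is $p+1+(p-1)=2p$, so the merged constant is $\left(\frac{1}{(p+1)2^{2p}}\right)^{1/p}=\frac{1}{4(p+1)^{1/p}}$. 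This is \emph{not} the stated $K_2=\left(\frac{1}{(p+1)2^{\,p+1+\frac{1}{pq}}}\right)^{1/p}$: the exponents $\frac{p}{q}=p-1$ and $\frac{1}{pq}=\frac{p-1}{p^2}$ disagree for every $p>1$ (e.g.\ for $q=p=2$ the true exponent is $4$, the stated one is $13/4$). So your assertion that the computation ``yields the constant $K_2$'' is false as written; either you mis-multiplied, or --- as is actually the case --- the paper's $K_2$ is itself erroneous, presumably a misprint of $2^{\,p+1+\frac{p}{q}}$ as $2^{\,p+1+\frac{1}{pq}}$.

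The gap is fixable but must be addressed: your (correct) constant $\frac{1}{4(p+1)^{1/p}}$ is \emph{smaller} than the printed $K_2$, so the corollary as stated still follows a fortiori from your computation, but only if you add that comparison explicitly; a proof that ends by identifying two unequal quantities is not a proof. It is also worth noting what the honest bookkeeping reveals about the content of the corollary: since $p+1\leq 2^{p}$ for $p\geq 1$, one has $\frac{1}{4}(p+1)^{-1/p}\geq\frac{1}{8}=K_1$, so the minimum is always attained at $K_1$ and the corollary (with either version of $K_2$) collapses to the bound of Theorem \ref{TTTT} alone.
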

\begin{theorem}\label{TTTT1}Let $f:I\subseteq\mathbb{R}\longrightarrow\mathbb{R}$
be a  differentiable  mapping  on $I^{0}$ with $a<b,$ and its derivative $f^\prime:[\frac{3a-b}{2},\frac{3b-a}{2}]\longrightarrow\mathbb{R},$ be a continuous on $[\frac{3a-b}{2},\frac{3b-a}{2}]$. Let $q\geq1,$ if $|f^{\prime\prime}|^q$ is a convex function on $[\frac{3a-b}{2},\frac{3b-a}{2}]$, then the following inequality holds
\begin{equation}
\left|\frac{1}{b-a}\int_a^bf(x)dx-\frac{f\left(\frac{3a-b}{2}\right)+
f\left(\frac{3b-a}{2}\right)+2f\left(\frac{a+b}{2}\right)}{4}\right|\leq\frac{(b-a)^2}{3}\left[
\frac{\left|f^{\prime\prime}\left(\frac{3a-b}{2}\right)\right|^q+
\left|f^{\prime\prime}\left(\frac{3b-a}{2}\right)\right|^q}{2}\right]^{\frac{1}{q}}
\end{equation}
\end{theorem}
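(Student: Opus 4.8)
The plan is to imitate the proofs of Theorems \ref{TTTT} and \ref{1TTTT}, but to invoke the second-order identity of Lemma \ref{l3} in place of the first-order identity of Lemma \ref{l2}, applied on the enlarged interval $\left[\frac{3a-b}{2},\frac{3b-a}{2}\right]$. Writing $A=\frac{3a-b}{2}$ and $B=\frac{3b-a}{2}$, one checks directly that this interval has midpoint $\frac{A+B}{2}=\frac{a+b}{2}$ and length $B-A=2(b-a)$, so Lemma \ref{l3} applied to $[A,B]$ gives the exact identity
\begin{equation*}
\frac{f(A)+f(B)}{2}-\frac{1}{2(b-a)}\int_{A}^{B}f(x)\,dx=2(b-a)^2\int_0^1 t(1-t)f^{\prime\prime}\bigl(tA+(1-t)B\bigr)\,dt .
\end{equation*}
This is the natural $f^{\prime\prime}$-analogue of equation (\ref{yy}), and it is the backbone of the argument.

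Next I would convert the left-hand side into the quantity appearing in the statement. Using the change of variable $x=\frac34 s+\frac{a+b}{4}$ exactly as in (\ref{PP}), the mean $\frac{1}{2(b-a)}\int_A^B f$ is re-expressed through $\frac{1}{b-a}\int_a^b f(x)\,dx$ and the midpoint value $f\!\left(\frac{a+b}{2}\right)$; substituting this relation into the identity above recombines the trapezoidal term $\frac{f(A)+f(B)}{2}$ with the midpoint term into $\frac{f(A)+f(B)+2f\left(\frac{a+b}{2}\right)}{4}$. After taking absolute values and moving them inside the integral this produces
\begin{equation*}
\left|\frac{1}{b-a}\int_a^b f(x)\,dx-\frac{f(A)+f(B)+2f\left(\frac{a+b}{2}\right)}{4}\right|\leq C\,(b-a)^2\int_0^1 t(1-t)\left|f^{\prime\prime}\bigl(tA+(1-t)B\bigr)\right|dt ,
\end{equation*}
where $C$ is the explicit constant produced by the bookkeeping of the two mean values.

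It then remains to estimate the last integral, which is where the hypotheses finally enter. Since $tA+(1-t)B$ is a convex combination of the endpoints $A$ and $B$, convexity of $|f^{\prime\prime}|^q$ gives $\left|f^{\prime\prime}(tA+(1-t)B)\right|^q\leq t\,|f^{\prime\prime}(A)|^q+(1-t)\,|f^{\prime\prime}(B)|^q$. Applying the power-mean inequality with the weight $t(1-t)$ and using the elementary moments $\int_0^1 t(1-t)\,dt=\frac16$ and $\int_0^1 t^2(1-t)\,dt=\int_0^1 t(1-t)^2\,dt=\frac{1}{12}$, one finds
\begin{equation*}
\int_0^1 t(1-t)\left|f^{\prime\prime}\bigl(tA+(1-t)B\bigr)\right|dt\leq\left(\frac16\right)^{1-\frac1q}\left(\frac{|f^{\prime\prime}(A)|^q+|f^{\prime\prime}(B)|^q}{12}\right)^{\frac1q},
\end{equation*}
and collecting the constants converts the right-hand side into a multiple of $\left(\tfrac{|f^{\prime\prime}(A)|^q+|f^{\prime\prime}(B)|^q}{2}\right)^{1/q}$ times $(b-a)^2$. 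I expect the main obstacle to be precisely the constant bookkeeping in the two preceding paragraphs: one must track the coefficient carefully through the passage between the mean over $[A,B]$ and the mean over $[a,b]$ and through the power-mean step, so as to land exactly on the factor $\frac{(b-a)^2}{3}$ asserted in the statement (and, in particular, to settle the sign conventions, which disappear once absolute values are taken).
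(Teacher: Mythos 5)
Your proposal follows essentially the same route as the paper's own proof: applying Lemma \ref{l3} on $\left[\frac{3a-b}{2},\frac{3b-a}{2}\right]$ is exactly the paper's identity (\ref{rrr}), combining it with the change-of-variable relation (\ref{PP}) yields the paper's inequality (\ref{maa}), and your power-mean estimate with the moments $\int_0^1 t(1-t)\,dt=\tfrac16$ and $\int_0^1 t^2(1-t)\,dt=\tfrac1{12}$ is precisely the paper's final step. The constant bookkeeping you were worried about does close: with the factor $2(b-a)^2$ appearing in (\ref{maa}), one gets $2\cdot\left(\tfrac16\right)^{1-\frac1q}\left(\tfrac1{12}\right)^{\frac1q}=\tfrac13\left(\tfrac12\right)^{\frac1q}$, which is exactly the stated bound $\frac{(b-a)^2}{3}\left[\frac{|f^{\prime\prime}(\frac{3a-b}{2})|^q+|f^{\prime\prime}(\frac{3b-a}{2})|^q}{2}\right]^{\frac1q}$.
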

\begin{proof}From Lemma \ref{l3}, we have
\begin{equation}\label{rrr}
\frac{1}{2(b-a)}\int_{\frac{3a-b}{2}}^{\frac{3b-a}{2}} f(t)dt=\frac{f\left(\frac{3b-a}{2}\right)+f\left(\frac{3a-b}{2}\right)}{2}
-2(b-a)^2\int_0^1t(1-t) f^{\prime\prime}\left(t\left(\frac{3a-b}{2}\right)+(1-t)\left(
\frac{3b-a}{2}\right)\right)dt.
\end{equation}
Thus, by (\ref{PP}) and (\ref{rrr}) we obtain
\begin{equation}\label{maa}
\left|\frac{1}{b-a}\int_a^bf(x)dx-\frac{f\left(\frac{3a-b}{2}\right)+f\left(\frac{3b-a}{2}\right)+2f\left(\frac{a+b}{2}\right)}{4}\right|\leq2(b-a)^2\int_0^1t(1-t) \left|f^{\prime\prime}\left(t\left(\frac{3a-b}{2}\right)+(1-t)\left(
\frac{3b-a}{2}\right)\right)\right|dt.
\end{equation}
Suppose that $q>1$, from the H\"older's inequality for $q,\;p=\frac{q}{q-1}$ we get
$$\int_0^1(t-t^2) \left|f^{\prime\prime}\left(t\left(\frac{3a-b}{2}\right)+(1-t)\left(
\frac{3b-a}{2}\right)\right)\right|dt=$$
\begin{equation*}
\begin{split}
\;\;\;\;\;\;\;\;\;\;\;\;\;\;\;\;\;\;\;\;\;\;\;\;\;\;\;\;\;\;\;\;\;\;\;\;\;\;\;\;\;\;\;&=\int_0^1 [(t-t^2)]^{1-\frac{1}{q}}[(t-t^2)]^{\frac{1}{q}}
\left|f^{\prime\prime}\left(t\left(\frac{3a-b}{2}\right)+(1-t)\left(
\frac{3b-a}{2}\right)\right)\right|dt\\
&\leq\left[\int_0^1 (t-t^2)\right]^{\frac{q-1}{q}}\left[\int_0^1(t-t^2)\left|f^{\prime\prime}\left(t\left(\frac{3a-b}{2}\right)+(1-t)\left(
\frac{3b-a}{2}\right)\right)\right|^q dt\right]^{\frac{1}{q}}\\
&\leq \left(\frac{1}{6}\right)^{\frac{q-1}{q}}\left[\int_0^1(t^2-t^3)
\left|f^{\prime\prime}\left(\frac{3a-b}{2}\right)\right|^qdt+\int_0^1(t-t^2)(1-t)
\left|f^{\prime\prime}\left(
\frac{3b-a}{2}\right)\right|^q dt\right]^{\frac{1}{q}}\\
&\leq \left(\frac{1}{6}\right)^{\frac{q-1}{q}}
\left[\frac{\left|f^{\prime\prime}\left(\frac{3a-b}{2}\right)\right|^q
+\left|f^{\prime\prime}\left(\frac{3b-a}{2}\right)\right|^q}{12}\right]^{\frac{1}{q}}.
\end{split}
\end{equation*}
Now suppose that $q=1$, we obtain
\begin{equation*}
\begin{split}
\int_0^1(t-t^2) \left|f^{\prime\prime}\left(t\left(\frac{3a-b}{2}\right)+(1-t)\left(
\frac{3b-a}{2}\right)\right)\right|dt&\leq \int_0^1(t-t^2)
\left[t\left|f^{\prime\prime}\left(\frac{3a-b}{2}\right)\right|+(1-t)\left|f^{\prime\prime}
\left(\frac{3b-a}{2}\right)\right|\right] dt\\
&=\frac{\left|f^{\prime\prime}
\left(\frac{3a-b}{2}\right)\right|+\left|f^{\prime\prime}
\left(\frac{3b-a}{2}\right)\right|}{12},
\end{split}
\end{equation*}
which completes the proof.
\end{proof}
\begin{remark}  If $|f^{\prime\prime}(x)|\leq K$ on $[\frac{3a-b}{2},\frac{3b-a}{2}]$
in Theorem \ref{TTTT1}, we get
$$\left|\frac{1}{b-a}\int_a^bf(x)dx-\frac{f\left(\frac{3a-b}{2}\right)+
f\left(\frac{3b-a}{2}\right)+2f\left(\frac{a+b}{2}\right)}{4}\right|\leq\frac{K(b-a)^2}{3}.$$
\end{remark}

The other type is given by the next Theorem. For this, we note that the Beta function and gamma function are defined by

$$B(x,y)=\int_0^1 t^{x-1}(1-t)^{y-1}dt,\;x,y>0,\;\;\textrm{and}\;\;\Gamma(x)=
\int_0^\infty t^{x-1}e^{-t}dt,\;\;x>0.$$
The Beta function satisfied the following properties:
$$B(x,x)=2^{1-2x}B(1/2,x),\;\textrm{and}\;B(x,y)=\frac{\Gamma(x)\Gamma(y)}{\Gamma(x+y)}.$$
In particular, we have
$$B(p+1,p+1)=2^{1-2(p+1)}B(1/2,p+1)=2^{1-2(p+1)}\frac{\Gamma(1/2)\Gamma(p+1)}{\Gamma(p+3/2)}
=2^{1-2(p+1)}\frac{\sqrt{\pi}\Gamma(p+1)}{\Gamma(p+3/2)}.$$

\begin{theorem}\label{TTTTt1}Let $f:I^{0}\subseteq\mathbb{R}\longrightarrow\mathbb{R}$
be a  differentiable  mapping  on $I^{0}$ with $a<b,$ and its derivative $f^\prime:[\frac{3a-b}{2},\frac{3b-a}{2}]\longrightarrow\mathbb{R},$ be a continuous on $[\frac{3a-b}{2},\frac{3b-a}{2}]$. Let $q\geq1,$ if $|f^{\prime\prime}|^q$ is a convex function on $[\frac{3a-b}{2},\frac{3b-a}{2}]$, then the following inequality holds
\begin{equation}
\left|\frac{1}{b-a}\int_a^bf(x)dx-\frac{f\left(\frac{3a-b}{2}\right)+
f\left(\frac{3b-a}{2}\right)+2f\left(\frac{a+b}{2}\right)}{4}\right|\leq\frac{(b-a)^2}{2}
\left(\frac{\sqrt{\pi}\Gamma(p+1)}{2\Gamma(p+\frac{3}{2})}\right)^{\frac{1}{p}}\left(\frac{
\left|f^{\prime\prime}\left(\frac{3a-b}{2}\right)\right|^q+
\left|f^{\prime\prime}\left(\frac{3b-a}{2}\right)\right|^q}{2}\right)^{\frac{1}{q}},
\end{equation}
where $\frac{1}{p}+\frac{1}{q}=1.$
\end{theorem}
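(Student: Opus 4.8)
The plan is to open exactly as in the proof of Theorem~\ref{TTTT1}, but then to apply H\"older's inequality in its classical form instead of the power-mean splitting. First I would reuse the reduction already established there: combining Lemma~\ref{l3} with the substitution identity (\ref{PP}) gives the estimate (\ref{maa}),
\begin{equation*}
\left|\frac{1}{b-a}\int_a^bf(x)dx-\frac{f\left(\frac{3a-b}{2}\right)+f\left(\frac{3b-a}{2}\right)+2f\left(\frac{a+b}{2}\right)}{4}\right|\leq2(b-a)^2\int_0^1t(1-t)\left|f^{\prime\prime}\left(t\left(\tfrac{3a-b}{2}\right)+(1-t)\left(\tfrac{3b-a}{2}\right)\right)\right|dt,
\end{equation*}
so that it suffices to bound the single integral on the right-hand side.

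For $q>1$ with $\frac1p+\frac1q=1$, I would apply H\"older's inequality keeping the entire weight $t(1-t)$ inside the first factor:
\begin{equation*}
\int_0^1t(1-t)\left|f^{\prime\prime}(\cdots)\right|dt\leq\left(\int_0^1 [t(1-t)]^p\,dt\right)^{\frac1p}\left(\int_0^1\left|f^{\prime\prime}(\cdots)\right|^q\,dt\right)^{\frac1q}.
\end{equation*}
The first factor is exactly a Beta integral, $\int_0^1 [t(1-t)]^p\,dt=\int_0^1 t^{(p+1)-1}(1-t)^{(p+1)-1}\,dt=B(p+1,p+1)$, and here I would invoke the evaluation recorded immediately before the statement, $B(p+1,p+1)=2^{1-2(p+1)}\frac{\sqrt{\pi}\,\Gamma(p+1)}{\Gamma(p+3/2)}$. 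For the second factor, convexity of $|f^{\prime\prime}|^q$ yields
\begin{equation*}
\left|f^{\prime\prime}\left(t\left(\tfrac{3a-b}{2}\right)+(1-t)\left(\tfrac{3b-a}{2}\right)\right)\right|^q\leq t\left|f^{\prime\prime}\left(\tfrac{3a-b}{2}\right)\right|^q+(1-t)\left|f^{\prime\prime}\left(\tfrac{3b-a}{2}\right)\right|^q,
\end{equation*}
and integrating over $[0,1]$ produces $\frac12\big(|f^{\prime\prime}(\tfrac{3a-b}{2})|^q+|f^{\prime\prime}(\tfrac{3b-a}{2})|^q\big)$, giving the symmetric mean appearing in the claim.

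It then remains to assemble the pieces and tidy up the constant. Writing $2^{1-2(p+1)}=\tfrac12\cdot 4^{-p}$, the factor $(4^{-p})^{1/p}=\tfrac14$ pulls out of the $1/p$-th power and, multiplied against the prefactor $2(b-a)^2$ from (\ref{maa}), leaves precisely $\frac{(b-a)^2}{2}\big(\frac{\sqrt{\pi}\,\Gamma(p+1)}{2\Gamma(p+3/2)}\big)^{1/p}$, matching the stated bound. I expect the only delicate point to be this bookkeeping of the powers of $2$ inside the $1/p$-th power; the analytic content---one application of H\"older together with a single use of convexity---is routine once the weight integral is recognized as $B(p+1,p+1)$. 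Finally, I would remark that the argument requires $q>1$ so that the conjugate exponent $p$ is finite, the limiting case $q=1$ being already covered by Theorem~\ref{TTTT1}.
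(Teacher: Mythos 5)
Your proposal is correct and follows essentially the same route as the paper's own proof: the reduction to the integral bound (\ref{maa}), H\"older's inequality with the full weight $t(1-t)$ kept in the first factor, the identification of $\int_0^1 [t(1-t)]^p\,dt$ as $B(p+1,p+1)$ via the evaluation stated before the theorem, and convexity of $|f^{\prime\prime}|^q$ for the second factor. Your explicit bookkeeping of the powers of $2$ (and the remark that $q>1$ is needed for a finite conjugate exponent $p$) is in fact cleaner than the paper's, which leaves that final simplification implicit.
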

\begin{proof}Again, by the H\"older's inequality and using the fact that the function
$|f^{\prime\prime}|^q$ is convex we have
$$\int_0^1(t-t^2) \left|f^{\prime\prime}\left(t\left(\frac{3a-b}{2}\right)+(1-t)\left(
\frac{3b-a}{2}\right)\right)\right|dt$$
\begin{equation}\label{ma}
\begin{split}
&\leq\left[\int_0^1(t-t^2)^p\right]^{\frac{1}{p}}
\left[\int_0^1\left|f^{\prime\prime}\left(t\left(\frac{3a-b}{2}\right)+(1-t)\left(
\frac{3b-a}{2}\right)\right)\right|^qdt\right]^{\frac{1}{q}}\\
&\leq \left[\int_0^1(t-t^2)^p\right]^{\frac{1}{p}}
\left[\left|f^{\prime\prime}\left(\frac{3a-b}{2}\right)\right|^q\int_0^1tdt+
\left|f^{\prime\prime}\left(\frac{3b-a}{2}\right)\right|^q\int_0^1(1-t)dt\right]^{\frac{1}{q}}
\\
&=\left(\frac{\sqrt{\pi}\Gamma(p+1)}{2^{1+2p}\Gamma(p+\frac{3}{2}}\right)^{\frac{1}{p}}\left(\frac{
\left|f^{\prime\prime}\left(\frac{3a-b}{2}\right)\right|^q+
\left|f^{\prime\prime}\left(\frac{3b-a}{2}\right)\right|^q}{2}\right)^{\frac{1}{q}}.
\end{split}
\end{equation}
Finally, from (\ref{maa}) and (\ref{ma}) we obtain the desired result.
\end{proof}
\begin{remark} With the above assumptions given that $|f^{\prime\prime}|\leq K$ on
$[\frac{3a-b}{2},\frac{3b-a}{2}]$ we obtain
$$\left|\frac{1}{b-a}\int_a^bf(x)dx-\frac{f\left(\frac{3a-b}{2}\right)+
f\left(\frac{3b-a}{2}\right)+2f\left(\frac{a+b}{2}\right)}{4}\right|\leq\frac{K(b-a)^2}{2}
\left(\frac{\sqrt{\pi}\Gamma(p+1)}{2\Gamma(p+\frac{3}{2})}\right)^{\frac{1}{p}},
$$
where $\frac{1}{p}+\frac{1}{q}=1.$
\end{remark}

Another Hermite--Hadamard type inequality for powers in terms of the second derivatives is obtained as following:

\begin{theorem}\label{T56} With the assumptions of Theorem 5 we have the inequality:
\begin{equation}
\left|\frac{1}{b-a}\int_a^bf(x)dx-\frac{f\left(\frac{3a-b}{2}\right)+
f\left(\frac{3b-a}{2}\right)+2f\left(\frac{a+b}{2}\right)}{4}\right|\leq (b-a)^2\left(
\left|f^{\prime\prime}\left(\frac{3a-b}{2}\right)\right|^q+(q+1)
\left|f^{\prime\prime}\left(\frac{3b-a}{2}\right)\right|^q\right)^{\frac{1}{q}},
\end{equation}
where
$$K(p,q)=2\left(\frac{1}{p+1}\right)^{\frac{1}{p}}\left(\frac{1}{(q+1)(q+2)}\right)^{\frac{1}{q}}.$$
\end{theorem}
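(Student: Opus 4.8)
The plan is to start exactly as in Theorems \ref{TTTT1} and \ref{TTTTt1}, from the representation-and-triangle-inequality bound \eqref{maa},
$$\left|\frac{1}{b-a}\int_a^bf(x)dx-\frac{f\left(\frac{3a-b}{2}\right)+f\left(\frac{3b-a}{2}\right)+2f\left(\frac{a+b}{2}\right)}{4}\right|\leq 2(b-a)^2\int_0^1 t(1-t)\left|f^{\prime\prime}\left(t\tfrac{3a-b}{2}+(1-t)\tfrac{3b-a}{2}\right)\right|dt,$$
so that everything reduces to estimating the single integral $I:=\int_0^1 t(1-t)|f''(\cdots)|\,dt$. The difference from Theorem \ref{TTTTt1} lies in how H\"older's inequality is applied: instead of keeping the whole weight $t(1-t)$ on the $L^p$ factor, I would split it asymmetrically, sending only the factor $t$ into the $L^p$ part and keeping $(1-t)$ together with $|f''|$ in the $L^q$ part. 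Concretely, with $\tfrac1p+\tfrac1q=1$,
$$I\leq\left(\int_0^1 t^p\,dt\right)^{1/p}\left(\int_0^1 (1-t)^q\left|f^{\prime\prime}\left(t\tfrac{3a-b}{2}+(1-t)\tfrac{3b-a}{2}\right)\right|^q dt\right)^{1/q},$$
where the first factor is immediately $\left(\tfrac{1}{p+1}\right)^{1/p}$.

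Next I would insert the convexity of $|f''|^q$, namely $|f''(t\tfrac{3a-b}{2}+(1-t)\tfrac{3b-a}{2})|^q\le t|f''(\tfrac{3a-b}{2})|^q+(1-t)|f''(\tfrac{3b-a}{2})|^q$, into the $L^q$ factor. This leaves two elementary Beta integrals: $\int_0^1 t(1-t)^q\,dt=B(2,q+1)=\tfrac{1}{(q+1)(q+2)}$ multiplying $|f''(\tfrac{3a-b}{2})|^q$, and $\int_0^1 (1-t)^{q+1}\,dt=\tfrac{1}{q+2}$ multiplying $|f''(\tfrac{3b-a}{2})|^q$. Factoring out the common $\tfrac{1}{(q+1)(q+2)}$ is precisely what produces the coefficient $(q+1)$ on the second term, so that
$$\left(\int_0^1 (1-t)^q|f''(\cdots)|^q dt\right)^{1/q}\le\left(\tfrac{1}{(q+1)(q+2)}\right)^{1/q}\left(|f''(\tfrac{3a-b}{2})|^q+(q+1)|f''(\tfrac{3b-a}{2})|^q\right)^{1/q}.$$

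Finally I would collect the constants: the prefactor $2(b-a)^2$ from \eqref{maa}, the $\left(\tfrac{1}{p+1}\right)^{1/p}$ from the first H\"older factor, and the $\left(\tfrac{1}{(q+1)(q+2)}\right)^{1/q}$ just obtained, which combine exactly into $K(p,q)(b-a)^2$ with $K(p,q)=2\left(\tfrac{1}{p+1}\right)^{1/p}\left(\tfrac{1}{(q+1)(q+2)}\right)^{1/q}$; thus the stated inequality should carry $K(p,q)(b-a)^2$ in front, the bare $(b-a)^2$ being a typographical omission. I do not expect a genuine obstacle here; the only real decision — and the step most likely to be mishandled — is the asymmetric allocation of the weight in H\"older, since it is precisely this break of symmetry between the two endpoints that accounts for the unequal coefficients $1$ and $(q+1)$. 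Everything else is the routine evaluation of the two Beta integrals and the bookkeeping of the multiplicative constants.
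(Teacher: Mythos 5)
Your proposal is correct and follows essentially the same route as the paper's own proof: start from the bound \eqref{maa}, apply H\"older with the asymmetric split (weight $t$ in the $L^p$ factor, $(1-t)$ kept with $|f''|$ in the $L^q$ factor), invoke convexity of $|f''|^q$, and evaluate $B(2,q+1)=\frac{1}{(q+1)(q+2)}$ and $\int_0^1(1-t)^{q+1}dt=\frac{1}{q+2}$ to produce the coefficient $(q+1)$. Your observation that the displayed inequality should read $K(p,q)(b-a)^2$ rather than a bare $(b-a)^2$ is also right; the paper defines $K(p,q)$ precisely because its proof yields that constant, and its omission from the display is a typographical slip.
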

\begin{proof}From the H\"older's inequality we have
$$\int_0^1(t-t^2) \left|f^{\prime\prime}\left(t\left(\frac{3a-b}{2}\right)+(1-t)\left(
\frac{3b-a}{2}\right)\right)\right|dt$$
\begin{equation*}
\begin{split}
\;\;\;\;\;\;\;\;\;\;\;\;\;\;\;\;\;\;\;\;\;\;\;\;\;\;\;\;\;\;\;\;\;\;\;\;&\leq\left[\int_0^1t^pdt\right]^{\frac{1}{p}}
\left[\int_0^1(1-t)^q\left|f^{\prime\prime}\left(t\left(\frac{3a-b}{2}\right)+(1-t)\left(
\frac{3b-a}{2}\right)\right)\right|^qdt\right]^{\frac{1}{q}}\\
&\leq \left[\int_0^1t^pdt\right]^{\frac{1}{p}}
\left[\left|f^{\prime\prime}\left(\frac{3a-b}{2}\right)\right|^q\int_0^1t(1-t)^qdt+
\left|f^{\prime\prime}\left(\frac{3b-a}{2}\right)\right|^q\int_0^1(1-t)^{q+1}dt\right]^{\frac{1}{q}}\\
&=\left(\frac{1}{p+1}\right)^{\frac{1}{p}}\left[B(2,q+1)\left|f^{\prime\prime}\left(\frac{3a-b}{2}\right)\right|^q+\frac{
\left|f^{\prime\prime}\left(\frac{3b-a}{2}\right)\right|^q}{q+2}\right]^{\frac{1}{q}}\\
&=\left(\frac{1}{p+1}\right)^{\frac{1}{p}}\left(\frac{1}{(q+1)(q+2)}\right)^{\frac{1}{q}}\left(
\left|f^{\prime\prime}\left(\frac{3a-b}{2}\right)\right|^q+(q+1)
\left|f^{\prime\prime}\left(\frac{3b-a}{2}\right)\right|^q\right)^{\frac{1}{q}}.
\end{split}
\end{equation*}
So, the proof of Theorem \ref{T56} is completes.
\end{proof}

A similar result is embodied in the following Theorem.

\begin{theorem}\label{T57} et $f:I^{0}\subseteq\mathbb{R}\longrightarrow\mathbb{R}$
be a  differentiable  mapping  on $I^{0}$ with $a<b,$ and its derivative $f^\prime:[\frac{3a-b}{2},\frac{3b-a}{2}]\longrightarrow\mathbb{R},$ be a continuous on $[\frac{3a-b}{2},\frac{3b-a}{2}]$. Let $q\geq1,$ if $|f^{\prime\prime}|^q$ is a convex function on $[\frac{3a-b}{2},\frac{3b-a}{2}]$, then the following inequality holds:
\begin{equation}
\left|\frac{1}{b-a}\int_a^bf(x)dx-\frac{f\left(\frac{3a-b}{2}\right)+
f\left(\frac{3b-a}{2}\right)+2f\left(\frac{a+b}{2}\right)}{4}\right|\leq (b-a)^2\left(2
\left|f^{\prime\prime}\left(\frac{3a-b}{2}\right)\right|^q+(q+1)
\left|f^{\prime\prime}\left(\frac{3b-a}{2}\right)\right|^q\right)^{\frac{1}{q}},
\end{equation}
where
$$K_2(q)=\left(\frac{2}{(q+1)(q+2)(q+3)}\right)^{\frac{1}{q}}.$$
\end{theorem}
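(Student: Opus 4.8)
The plan is to start, exactly as in the proofs of Theorems \ref{TTTT1} and \ref{T56}, from the bound (\ref{maa}), which reduces the whole statement to estimating the single integral $\int_0^1 t(1-t)\,\big|f''\big(t(\frac{3a-b}{2})+(1-t)(\frac{3b-a}{2})\big)\big|\,dt$. Since (\ref{maa}) already carries the outer factor $2(b-a)^2$, it suffices to bound this integral by $\tfrac12 K_2(q)\big(2|f''(\frac{3a-b}{2})|^q+(q+1)|f''(\frac{3b-a}{2})|^q\big)^{1/q}$, after which the result drops out.

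The essential difference from Theorem \ref{T56} lies in the \emph{choice of the H\"older splitting}. Instead of writing $t(1-t)=t\cdot(1-t)$ and sending the entire factor $(1-t)$ into the $q$-part, I would split $t(1-t)=t^{1/p}\cdot\big(t^{1/q}(1-t)\big)$ and apply H\"older's inequality in the form
$$\int_0^1 t(1-t)\,|f''(\phi(t))|\,dt\le\Big(\int_0^1 t\,dt\Big)^{1/p}\Big(\int_0^1 t(1-t)^q\,|f''(\phi(t))|^q\,dt\Big)^{1/q},$$
where $\phi(t)=t(\frac{3a-b}{2})+(1-t)(\frac{3b-a}{2})$ and $\tfrac1p+\tfrac1q=1$. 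This split makes the $p$-factor equal to the $p$-free quantity $(1/2)^{1/p}$, which is exactly why the final constant $K_2(q)$ ends up depending on $q$ alone.

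Next I would invoke the convexity of $|f''|^q$ to write $|f''(\phi(t))|^q\le t\,|f''(\frac{3a-b}{2})|^q+(1-t)\,|f''(\frac{3b-a}{2})|^q$, so that the remaining integral decomposes as $\int_0^1 t^2(1-t)^q\,dt\,|f''(\frac{3a-b}{2})|^q+\int_0^1 t(1-t)^{q+1}\,dt\,|f''(\frac{3b-a}{2})|^q$. These are precisely the Beta integrals $B(3,q+1)=\frac{2}{(q+1)(q+2)(q+3)}$ and $B(2,q+2)=\frac{1}{(q+2)(q+3)}$, whose ratio is $\frac{q+1}{2}$; factoring out their common value $\frac{1}{(q+1)(q+2)(q+3)}$ recasts the bracket as $\frac{1}{(q+1)(q+2)(q+3)}\big(2|f''(\frac{3a-b}{2})|^q+(q+1)|f''(\frac{3b-a}{2})|^q\big)$.

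Finally I would collect the constants. Raising the bracket to the power $1/q$ and multiplying by $(1/2)^{1/p}$ together with the outer factor $2(b-a)^2$ from (\ref{maa}), the prefactor becomes $2\cdot(1/2)^{1/p}\cdot\big((q+1)(q+2)(q+3)\big)^{-1/q}=2^{1/q}\big((q+1)(q+2)(q+3)\big)^{-1/q}=K_2(q)$, using $1-\tfrac1p=\tfrac1q$, which is the asserted inequality. The one genuinely delicate point is precisely that asymmetric H\"older split in the second step: it must be arranged so that the $p$-integral is $\int_0^1 t\,dt$ (leaving no surviving $p$-dependence) while the weight retained in the $q$-integral is $t(1-t)^q$, which is what produces the Beta values $B(3,q+1)$ and $B(2,q+2)$ in the required $2:(q+1)$ proportion; every subsequent step is a routine Beta-function computation.
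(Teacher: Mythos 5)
Your proposal is correct and is essentially the paper's own proof: the paper also starts from (\ref{maa}) and applies the power-mean inequality with weight $t$, which is exactly your H\"older split $t(1-t)=t^{1/p}\cdot\bigl(t^{1/q}(1-t)\bigr)$ since $\bigl(\int_0^1 t\,dt\bigr)^{1-1/q}=\bigl(\int_0^1 t\,dt\bigr)^{1/p}$, then uses convexity of $|f''|^q$ to reach the same Beta integrals $B(3,q+1)$ and $B(2,q+2)$ and the constant $K_2(q)$. Note that you in fact prove the intended (stronger) bound with the factor $K_2(q)$ present on the right-hand side, which the printed statement of the theorem omits through an evident typographical slip.
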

\begin{proof}From the  power--mean inequality we obtain
$$\int_0^1(t-t^2) \left|f^{\prime\prime}\left(t\left(\frac{3a-b}{2}\right)+(1-t)\left(
\frac{3b-a}{2}\right)\right)\right|dt$$
\begin{equation*}
\begin{split}
\;\;\;\;\;\;\;\;\;\;\;\;\;\;\;\;\;\;\;\;\;\;\;\;\;\;\;\;\;\;\;\;\;\;\;\;&\leq\left[\int_0^1tdt\right]^{1-\frac{1}{q}}
\left[\int_0^1t(1-t)^q\left|f^{\prime\prime}\left(t\left(\frac{3a-b}{2}\right)+(1-t)\left(
\frac{3b-a}{2}\right)\right)\right|^qdt\right]^{\frac{1}{q}}\\
&\leq \left[\int_0^1tdt\right]^{1-\frac{1}{q}}
\left[\left|f^{\prime\prime}\left(\frac{3a-b}{2}\right)\right|^q\int_0^1t^2(1-t)^qdt+
\left|f^{\prime\prime}\left(\frac{3b-a}{2}\right)\right|^q\int_0^1t(1-t)^{q+1}dt\right]^{\frac{1}{q}}\\
&=\left(\frac{1}{2}\right)^{1-\frac{1}{q}}\left[B(3,q+1)\left|f^{\prime\prime}\left(\frac{3a-b}{2}\right)\right|^q+B(2,q+2)
\left|f^{\prime\prime}\left(\frac{3b-a}{2}\right)\right|^q\right]^{\frac{1}{q}}\\
&=\left(\frac{1}{2}\right)^{1-\frac{1}{q}}\left(\frac{1}{(q+1)(q+2)(q+3)}\right)^{\frac{1}{q}}\left(2
\left|f^{\prime\prime}\left(\frac{3a-b}{2}\right)\right|^q+(q+1)
\left|f^{\prime\prime}\left(\frac{3b-a}{2}\right)\right|^q\right)^{\frac{1}{q}}.
\end{split}
\end{equation*}
which completes the proof of Theorem \ref{T57}.
\end{proof}
\begin{corollary} From  Theorem 5--7 we get the following inequality for $q > 1$
$$\left|\frac{1}{b-a}\int_a^bf(x)dx-\frac{f\left(\frac{3a-b}{2}\right)+
f\left(\frac{3b-a}{2}\right)+2f\left(\frac{a+b}{2}\right)}{4}\right|\leq\min(K_3, K_4, K_5, K_6) (b-a)^2,$$
where
\begin{equation*}
\begin{split}
K_3&=\frac{1}{3}\left(\frac{
\left|f^{\prime\prime}\left(\frac{3a-b}{2}\right)\right|^q+
\left|f^{\prime\prime}\left(\frac{3b-a}{2}\right)\right|^q}{2}\right)^{\frac{1}{q}}\\
K_4&=2
\left(\frac{\sqrt{\pi}\Gamma(p+1)}{2\Gamma(p+\frac{3}{2})}\right)^{\frac{1}{p}}\left(\frac{
\left|f^{\prime\prime}\left(\frac{3a-b}{2}\right)\right|^q+
\left|f^{\prime\prime}\left(\frac{3b-a}{2}\right)\right|^q}{2}\right)^{\frac{1}{q}}\\
K_5&=2\left(\frac{1}{p+1}\right)^{\frac{1}{p}}\left(\frac{1}{(q+1)(q+2)}\right)^{\frac{1}{q}}\left(
\left|f^{\prime\prime}\left(\frac{3a-b}{2}\right)\right|^q+(q+1)
\left|f^{\prime\prime}\left(\frac{3b-a}{2}\right)\right|^q\right)^{\frac{1}{q}}\\
K_6&=\left(\frac{2}{(q+1)(q+2)(q+3)}\right)^{\frac{1}{q}}\left(2
\left|f^{\prime\prime}\left(\frac{3a-b}{2}\right)\right|^q+(q+1)
\left|f^{\prime\prime}\left(\frac{3b-a}{2}\right)\right|^q\right)^{\frac{1}{q}}.
\end{split}
\end{equation*}
\end{corollary}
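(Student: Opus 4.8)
The plan is to exploit the fact that Theorems~\ref{TTTT1}, \ref{TTTTt1}, \ref{T56} and \ref{T57} all control one and the same quantity, namely
$$\mathcal{E}(f):=\left|\frac{1}{b-a}\int_a^bf(x)dx-\frac{f\left(\frac{3a-b}{2}\right)+f\left(\frac{3b-a}{2}\right)+2f\left(\frac{a+b}{2}\right)}{4}\right|,$$
and that they are established under exactly the same set of hypotheses on $f$: differentiability on $I^{0}$, continuity of $f'$ on $[\frac{3a-b}{2},\frac{3b-a}{2}]$, and convexity of $|f^{\prime\prime}|^q$. Since the corollary restricts attention to $q>1$, the conjugate exponent $p=\frac{q}{q-1}$ is finite, so that the H\"older-based estimates of Theorems~\ref{TTTTt1}, \ref{T56} and the power-mean estimate of Theorem~\ref{TTTT1}, \ref{T57} are all simultaneously valid.

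First I would rewrite the four conclusions in the normalized form $\mathcal{E}(f)\le K_i\,(b-a)^2$ for $i=3,4,5,6$, reading off the factors directly from the right-hand sides of the four theorems: the common prefactor $(b-a)^2$ is isolated, and what remains is precisely the quantity labelled $K_i$ in the statement. Concretely, $K_3$ is the power-mean estimate furnished by Theorem~\ref{TTTT1}, $K_4$ is the $\Gamma$/Beta H\"older estimate of Theorem~\ref{TTTTt1}, while $K_5$ and $K_6$ are the two mixed H\"older and power-mean estimates coming from Theorems~\ref{T56} and \ref{T57}, respectively. In each case the bound ultimately springs from estimate~(\ref{maa}), in which the integral $\int_0^1 t(1-t)\,|f^{\prime\prime}(\cdots)|\,dt$ is majorized in four different ways and then multiplied by the same factor $2(b-a)^2$.

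The final step is a purely logical observation: a single nonnegative number $\mathcal{E}(f)$ that simultaneously satisfies $\mathcal{E}(f)\le K_3(b-a)^2,\ \mathcal{E}(f)\le K_4(b-a)^2,\ \mathcal{E}(f)\le K_5(b-a)^2$ and $\mathcal{E}(f)\le K_6(b-a)^2$ must lie below the smallest of these four upper bounds, whence
$$\mathcal{E}(f)\le \min\{K_3,K_4,K_5,K_6\}\,(b-a)^2,$$
which is exactly the asserted inequality. There is no genuine analytic difficulty in this corollary; the only point demanding care is bookkeeping, namely checking that the displayed constants $K_3,\dots,K_6$ coincide verbatim with the right-hand factors produced in the proofs of the four theorems—in particular that the common prefactor $2(b-a)^2$ from~(\ref{maa}) has been correctly absorbed into each $K_i$—so that the four inequalities really are statements about the identical functional $\mathcal{E}(f)$ and may legitimately be combined by taking the minimum.
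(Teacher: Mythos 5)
Your proposal is correct and is essentially the paper's own (implicit) argument: the corollary is stated without proof precisely because it is the immediate combination of Theorems \ref{TTTT1}, \ref{TTTTt1}, \ref{T56} and \ref{T57}, all of which bound the same error functional $\mathcal{E}(f)$ under identical hypotheses, so one only needs to take the minimum of the four bounds. One small caveat on your ``coincide verbatim'' bookkeeping claim: $K_4$ is in fact four times the constant actually produced by Theorem \ref{TTTTt1} (whose prefactor is $\tfrac{1}{2}$, not $2$), and the displayed statements of Theorems \ref{T56} and \ref{T57} omit the factors $K(p,q)$ and $K_2(q)$ that their proofs generate; these discrepancies are harmless for your argument, since each $K_i$ still dominates the constant coming from the corresponding proof, which is all the minimum step requires.
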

\section{Applications}
\subsection{Applications to special means}
Now using the results of Section 3, we give some applications to special means of positive real numbers.\\
\noindent 1. The arithmetic mean:
$$A=A(a,b)=\frac{a+b}{2};\;a,b\in\mathbb{R},\;\;\textrm{with}\;\;a,b>0.$$
\noindent 2. The geometric mean:
$$G=G(a,b)=\sqrt{ab};\;a,b\in\mathbb{R},\;\;\textrm{with}\;\;a,b>0.$$
\noindent 3. The logarithmic  mean:
$$L(a,b)=\frac{b-a}{\log b-\log a}.$$
\noindent 4.  The generalized logarithmic mean:
$$L_n(a,b)=\left[\frac{b^{n+1}-a^{n+1}}{(b-a)(n+1)}\right]^{\frac{1}{n}};\;n
\in\mathbb{Z}\setminus \{-1,0\},\;a,b\in\mathbb{R},\;\;\textrm{with}\;\;a,b>0.$$

\begin{propo}Let $n\in\mathbb{Z}\setminus\{-1,0\}$ and $a,b\in\mathbb{R}$ such that $0<a<b.$
Then the hollowing inequalities
\begin{equation}
\left|2L_n^n(a,b)-A^n(a,b)\right|\leq A\left(\left|\frac{3b-a}{2}\right|^n,
\left|\frac{3a-b}{2}\right|^n\right),
\end{equation}
and
\begin{equation}
\left|A^n(a,b)-L_n^n(a,b)\right|\leq\min\{K_1,K_2\}(2^{\frac{1}{q}}|n|(b-a))\left[A\left(\left|\frac{3a-b}{2}\right|^{(n-1)q},
\left|\frac{3b-a}{2}\right|^{(n-1)q}\right)\right]^{1/q},
\end{equation}
holds.
\end{propo}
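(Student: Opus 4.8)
The plan is to specialize both inequalities to the test function $f(x)=x^{n}$ on $[a,b]$, for which the integral mean is exactly a power of the generalized logarithmic mean. First I would record the two computations that drive everything: since
\[
\frac{1}{b-a}\int_a^b x^{n}\,dx=\frac{b^{n+1}-a^{n+1}}{(b-a)(n+1)}=L_n^n(a,b),
\qquad
\left(\frac{a+b}{2}\right)^{n}=A^{n}(a,b),
\]
the left-hand sides of the two stated inequalities are precisely (twice) the quantities appearing on the left of inequality (\ref{K2}) and of Corollary \ref{c1} when $f(x)=x^n$.

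For the first inequality, I would take inequality (\ref{K2}) with $f(x)=x^n$ and multiply through by $2$, giving
\[
\left|2L_n^n(a,b)-A^{n}(a,b)\right|\le
\left|\frac{\left(\frac{3b-a}{2}\right)^{n}+\left(\frac{3a-b}{2}\right)^{n}}{2}\right|.
\]
Then the triangle inequality together with $|x^{n}|=|x|^{n}$ bounds the right-hand side by $\tfrac12\bigl(|\tfrac{3b-a}{2}|^{n}+|\tfrac{3a-b}{2}|^{n}\bigr)$, which is exactly $A\!\left(|\tfrac{3b-a}{2}|^{n},|\tfrac{3a-b}{2}|^{n}\right)$ by the definition of the arithmetic mean.

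For the second inequality I would apply Corollary \ref{c1} to $f(x)=x^n$, so that $f'(x)=n x^{n-1}$ and $|f'(x)|^{q}=|n|^{q}|x|^{(n-1)q}$. Substituting gives
\[
\left|L_n^n(a,b)-A^{n}(a,b)\right|\le
\min\{K_1,K_2\}\,(b-a)\,|n|\left(\left|\tfrac{3a-b}{2}\right|^{(n-1)q}+\left|\tfrac{3b-a}{2}\right|^{(n-1)q}\right)^{1/q}.
\]
Recognizing the sum inside the root as $2\,A\!\left(|\tfrac{3a-b}{2}|^{(n-1)q},|\tfrac{3b-a}{2}|^{(n-1)q}\right)$ and pulling the factor $2^{1/q}$ out of the $q$-th root then yields the claimed form.

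The main obstacle is verifying the convexity hypotheses needed to invoke (\ref{K2}) and Corollary \ref{c1}. Lemma \ref{TT1} and Corollary \ref{c1} require $f$ (respectively $|f'|^q$) to be convex on $[\tfrac{3a-b}{2},\tfrac{3b-a}{2}]$, but since $0<a<b$ this interval can contain negative points (when $b>3a$) and may straddle the origin, where $x^n$ need not be convex for general integer $n$. To make the application legitimate one restricts attention to the cases in which $\tfrac{3a-b}{2}\ge 0$, or in which the parity of $n$ keeps $x^n$ convex across the interval, so that both $x^n$ and $|n|^q|x|^{(n-1)q}$ are convex there; once this is secured, the remaining steps are routine algebra.
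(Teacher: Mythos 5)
Your proof takes essentially the same route as the paper's, which is just a one-line citation: apply Lemma \ref{TT1} (inequality (\ref{K2})) and Theorems \ref{TTTT}--\ref{1TTTT} (equivalently Corollary \ref{c1}) to $f(x)=x^n$; your write-up simply supplies the algebra that the paper omits. Your closing caveat is a genuine observation the paper overlooks: when $b>3a$ the interval $[\frac{3a-b}{2},\frac{3b-a}{2}]$ contains negative points where $x^n$ (and $|n|^q|x|^{(n-1)q}$) need not be convex, so the restriction you impose is in fact needed for the cited results to be applicable.
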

\begin{proof}
The assertion follows from Theorem \ref{TT1} and Theorem \ref{TTTT}--\ref{1TTTT} for $f(x)=x^n$ and $n$ as specified above.
\end{proof}
\begin{propo}Let  $a,b\in\mathbb{R}$ such that $0<a<b.$ Then the following inequalities
\begin{equation}
\left|2G^{-2}(a,b)-A^{-2}(a,b)\right|\leq
A\left(\left(\frac{3a-b}{2}\right)^{-2},\left(\frac{3b-a}{2}\right)^{-2}\right),
\end{equation}
and
\begin{equation}
\left|G^{-2}(a,b)-A^{-2}(a,b)\right|\leq\min\{K_1,K_2\}(4^{\frac{1}{q}}(b-a))
\left[A\left(\left|\frac{3a-b}{2}\right|^{-3q},\left|\frac{3b-a}{2}\right|^{-3q}\right)\right]^{1/q},
\end{equation}
are valid.
\end{propo}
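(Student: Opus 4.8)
The plan is to specialize Lemma \ref{TT1} and Corollary \ref{c1} to the convex test function $f(x)=x^{-2}$, in exact parallel with the way the preceding proposition was read off from $f(x)=x^n$. The first step is to record the two evaluations that convert the abstract quantities into the named means. Since $f''(x)=6x^{-4}>0$, the function $f(x)=x^{-2}$ is convex on any interval not containing the origin, and one computes directly that
$$f\left(\frac{a+b}{2}\right)=\left(\frac{a+b}{2}\right)^{-2}=A^{-2}(a,b),\qquad \frac{1}{b-a}\int_a^b x^{-2}\,dx=\frac{1}{b-a}\left(\frac1a-\frac1b\right)=\frac{1}{ab}=G^{-2}(a,b).$$
This is nothing but the $n=-2$ case of the generalized logarithmic mean, where $L_{-2}^{-2}(a,b)=G^{-2}(a,b)$; recognizing this identity is what allows the $n=-2$ instance of the previous proposition to be restated in terms of $G$.

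For the first inequality I would use inequality (\ref{K2}) of Lemma \ref{TT1}. Multiplying (\ref{K2}) by $2$ and inserting the two evaluations above yields
$$\left|2G^{-2}(a,b)-A^{-2}(a,b)\right|\leq\frac{1}{2}\left|\left(\frac{3b-a}{2}\right)^{-2}+\left(\frac{3a-b}{2}\right)^{-2}\right|.$$
Because the exponent $-2$ is even, both summands are nonnegative, so the outer absolute value is superfluous and the right-hand side is exactly $A\big((\tfrac{3a-b}{2})^{-2},(\tfrac{3b-a}{2})^{-2}\big)$, which is the asserted bound.

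For the second inequality I would apply Corollary \ref{c1} to the same $f$. Here $f'(x)=-2x^{-3}$, so that $|f'(x)|^q=2^q|x|^{-3q}$, and $|f'|^q$ is convex on $[\tfrac{3a-b}{2},\tfrac{3b-a}{2}]$ as long as that interval avoids the origin. Evaluating at the endpoints and using $x+y=2A(x,y)$ gives
$$\left(\left|f'\left(\tfrac{3a-b}{2}\right)\right|^q+\left|f'\left(\tfrac{3b-a}{2}\right)\right|^q\right)^{1/q}=2\left(\left|\tfrac{3a-b}{2}\right|^{-3q}+\left|\tfrac{3b-a}{2}\right|^{-3q}\right)^{1/q}=2\cdot 2^{1/q}\left[A\left(\left|\tfrac{3a-b}{2}\right|^{-3q},\left|\tfrac{3b-a}{2}\right|^{-3q}\right)\right]^{1/q}.$$
Substituting this together with the evaluations $G^{-2}$ and $A^{-2}$ into Corollary \ref{c1} produces the stated inequality.

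The algebra is routine, so the delicate points are structural rather than computational. The genuine hypothesis hidden in the statement is that $3a>b$, which is needed so that $0\notin[\tfrac{3a-b}{2},\tfrac{3b-a}{2}]$ and hence so that both $f$ and $|f'|^q$ are convex on the full interval; I would make this explicit. I would also present the constant in the second inequality transparently, noting that the factor produced by $|f'|$ is $2\cdot 2^{1/q}=2^{1+1/q}$, which agrees with the displayed $4^{1/q}$ exactly when $q=1$ and can be traced to the coefficient $-2$ in $f'(x)=-2x^{-3}$.
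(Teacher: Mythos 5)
Your proposal is correct and follows exactly the paper's own route: the paper's entire proof is the one-line assertion that the result follows from its Lemma 3 and Theorems 1--2 (i.e.\ Corollary 1) applied to $f(x)=1/x^2$, which is precisely the specialization you carry out in detail. The two caveats you flag are genuine defects of the paper's statement rather than gaps in your argument: convexity of $x^{-2}$ and of $|f'|^q$ on $\left[\frac{3a-b}{2},\frac{3b-a}{2}\right]$ does require $3a>b$, and the constant actually produced by the corollary is $2^{1+1/q}$ rather than the displayed $4^{1/q}$ (these coincide only at $q=1$), so for $q>1$ what this method proves is your corrected version of the second inequality, not the literal statement.
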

\begin{proof}
 The  assertion  follows from  Theorem \ref{TT1} and Theorem \ref{TTTT}--\ref{1TTTT} applied  for $f(x)=\frac{1}{x^2}.$
\end{proof}
\begin{propo}Let $q\geq1$ and $a,b\in\mathbb{R}$ such that $0<a<b.$ Then the following inequalities
\begin{equation}
\left|A^{-1}(a,b)-2L^{-1}(a,b)\right|\leq A\left(\left|\frac{3a-b}{2}\right|^{-1},
\left|\frac{3b-a}{2}\right|^{-1}\right),
\end{equation}
and
\begin{equation}
\left|A^{-1}(a,b)-L^{-1}(a,b)\right|\leq\min\{K_1,K_2\}(2^{\frac{1}{q}}(b-a))\left
[A\left(\left|\frac{3a-b}{2}\right|^{-2q},\left|\frac{3b-a}{2}\right|^{-2q}\right)\right]^{1/q},
\end{equation}
holds true.
\end{propo}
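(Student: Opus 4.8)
The plan is to specialize the inequalities already established to the function $f(x)=1/x$, in exactly the way Propositions~1 and~2 specialize them to $f(x)=x^n$ and $f(x)=1/x^2$. First I would record the two evaluations that make the means appear. Since $\int_a^b x^{-1}\,dx=\log b-\log a$, we have $\frac{1}{b-a}\int_a^b f(x)\,dx=\frac{\log b-\log a}{b-a}=L^{-1}(a,b)$, and likewise $f\!\left(\frac{a+b}{2}\right)=\frac{2}{a+b}=A^{-1}(a,b)$. Differentiating gives $f'(x)=-x^{-2}$, hence $\left|f'\!\left(\frac{3a-b}{2}\right)\right|^q=\left|\frac{3a-b}{2}\right|^{-2q}$ and $\left|f'\!\left(\frac{3b-a}{2}\right)\right|^q=\left|\frac{3b-a}{2}\right|^{-2q}$. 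With these identities the two stated inequalities reduce to translating the general bounds into mean-notation.

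For the first inequality I would invoke bound (\ref{K2}) of Lemma~\ref{TT1}. Its left-hand side becomes $\left|L^{-1}(a,b)-\tfrac12 A^{-1}(a,b)\right|=\tfrac12\left|A^{-1}(a,b)-2L^{-1}(a,b)\right|$, while its right-hand side is $\tfrac14\left|f\!\left(\tfrac{3b-a}{2}\right)+f\!\left(\tfrac{3a-b}{2}\right)\right|=\tfrac12\left|\tfrac{1}{3b-a}+\tfrac{1}{3a-b}\right|$. Clearing the common factor $\tfrac12$ and applying the triangle inequality $|u+v|\le|u|+|v|$ to the two reciprocals bounds the right-hand side by $\frac{1}{|3a-b|}+\frac{1}{|3b-a|}$, which is exactly $A\!\left(\left|\tfrac{3a-b}{2}\right|^{-1},\left|\tfrac{3b-a}{2}\right|^{-1}\right)$. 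This yields the first claimed inequality verbatim.

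For the second inequality I would instead use Corollary~\ref{c1} (which merges Theorems~\ref{TTTT} and~\ref{1TTTT}). Its left-hand side is directly $\left|L^{-1}(a,b)-A^{-1}(a,b)\right|$, and by the endpoint computation above its right-hand side is $\min\{K_1,K_2\}(b-a)\left(\left|\tfrac{3a-b}{2}\right|^{-2q}+\left|\tfrac{3b-a}{2}\right|^{-2q}\right)^{1/q}$. The only remaining step is cosmetic: writing the sum of the two reciprocal powers as $2\,A\!\left(\left|\tfrac{3a-b}{2}\right|^{-2q},\left|\tfrac{3b-a}{2}\right|^{-2q}\right)$ and pulling the factor $2$ outside the exponent as $2^{1/q}$ reproduces precisely the stated form.

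The single point requiring care — and the main (mild) obstacle — is the hypothesis that $f=1/x$ and $|f'|^q$ be convex on $[\tfrac{3a-b}{2},\tfrac{3b-a}{2}]$. This holds genuinely only when the whole interval lies in $(0,\infty)$, i.e. when $b<3a$; if $b\ge 3a$ the left endpoint $\tfrac{3a-b}{2}$ is nonpositive and $1/x$ is neither convex nor even defined across $0$. The absolute values $|\cdot|^{-1}$ and $|\cdot|^{-2q}$ on the right-hand sides are exactly what keeps the bounds formally meaningful in general, but the convexity-based derivation should be read under the restriction $\tfrac{3a-b}{2}>0$. Apart from this domain remark, the proof is a direct substitution into Lemma~\ref{TT1} and Corollary~\ref{c1}.
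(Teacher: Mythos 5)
Your proposal is correct and takes essentially the same route as the paper: the paper's entire proof is the one-line remark that the assertion follows from Lemma~\ref{TT1} and Theorems~\ref{TTTT}--\ref{1TTTT} applied to $f(x)=\frac{1}{x}$, which is exactly the substitution you carry out explicitly (your use of Corollary~\ref{c1} is the same thing, since it merely packages those two theorems). Your closing remark about the domain restriction --- that convexity of $1/x$ and $|f'|^q$ on $\left[\frac{3a-b}{2},\frac{3b-a}{2}\right]$ genuinely requires $b<3a$ --- identifies a hypothesis the paper silently ignores, and is a worthwhile caveat.
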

\begin{proof}
The assertion follows from Theorem \ref{TT1} and Theorem \ref{TTTT}--\ref{1TTTT} for $f(x)=\frac{1}{x}.$
\end{proof}
\subsection{The midpoint formula}
Let $d$ be a partition $a=x_0<x_1<...<x_{m-1}<x_m=b$ of the interval $[a,b]$ and consider the quadrature  formula
$$\int_a^b f(x)dx=T_i(f,d)+E_i(f,d),\;i=1,2,$$
where
$$T_1(f,d)=\sum_{i=0}^{m-1}\frac{f(x_i)+f(x_{i+1})}{2}(x_{i+1}-x_{i})$$
for  the  trapezoidal  version  and
$$T_2(f,d)=\sum_{i=0}^{m-1}f\left(\frac{x_i+x_{i+1}}{2}\right)(x_{i+1}-x_{i})$$
for  the  midpoint  version  and  $E_i(f, d)$  denotes  the  associated  approximation  error.
\begin{propo}Suppose that the function $f$ is convex, then for every partition of $[a,b]$ the  midpoint  error  satisfies
\begin{equation}
\begin{split}
\left|\int_a^b f(x)dx+E_2(f,d)\right|&\leq\sum_{i=0}^{m-1}(x_{i+1}-x_i)\frac{\left|f\left(\frac{3x_i-x_{i+1}}{2}\right)+f\left(\frac{3x_{i+1}-x_i}{2}\right)\right|}{2}\\
&\leq\sum_{i=1}^{m-1}(x_{i+1}-x_i)\max\left(\left|f\left(\frac{3x_i-x_{i+1}}{2}\right)\right|,\left|f\left(\frac{3x_{i+1}-x_i}{2}\right)\right|\right).
\end{split}
\end{equation}
\end{propo}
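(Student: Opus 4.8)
The plan is to derive the estimate by applying inequality (\ref{K2}) of Lemma \ref{TT1} on each subinterval of the partition and then summing. First I would specialize (\ref{K2}) to the generic subinterval $[x_i,x_{i+1}]$, taking $a=x_i$ and $b=x_{i+1}$; this gives
$$\left|\frac{1}{x_{i+1}-x_i}\int_{x_i}^{x_{i+1}}f(x)\,dx-\frac{1}{2}f\left(\frac{x_i+x_{i+1}}{2}\right)\right|\le\left|\frac{f\left(\frac{3x_{i+1}-x_i}{2}\right)+f\left(\frac{3x_i-x_{i+1}}{2}\right)}{4}\right|.$$
Multiplying through by the length $x_{i+1}-x_i$ clears the averaging factor $1/(x_{i+1}-x_i)$ and yields a bound for $\left|\int_{x_i}^{x_{i+1}}f-\tfrac{1}{2}(x_{i+1}-x_i)f(\tfrac{x_i+x_{i+1}}{2})\right|$ on each piece.

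Next I would sum these $m$ inequalities over $i=0,\dots,m-1$ and apply the triangle inequality on the left. Since $\sum_i\int_{x_i}^{x_{i+1}}f=\int_a^b f$ and $\sum_i(x_{i+1}-x_i)f(\tfrac{x_i+x_{i+1}}{2})=T_2(f,d)$, the left-hand side collapses to $\left|\int_a^b f-\tfrac{1}{2}T_2(f,d)\right|$, while the right-hand side becomes the summed error bound with a factor $1/4$.

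The key bookkeeping step, and the one place where care is needed, is to rewrite this quantity in terms of the error $E_2$. Using the defining relation $\int_a^b f=T_2(f,d)+E_2(f,d)$, I substitute $T_2=\int_a^b f-E_2$ and check that $\int_a^b f-\tfrac{1}{2}T_2=\tfrac{1}{2}\left(\int_a^b f+E_2\right)$. Hence the summed inequality reads $\tfrac{1}{2}\left|\int_a^b f+E_2\right|\le\sum_{i=0}^{m-1}(x_{i+1}-x_i)\frac{|f(\frac{3x_{i+1}-x_i}{2})+f(\frac{3x_i-x_{i+1}}{2})|}{4}$, and multiplying by $2$ reproduces the first asserted inequality exactly. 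I expect this identification of $\int_a^b f-\tfrac{1}{2}T_2$ with $\tfrac{1}{2}(\int_a^b f+E_2)$ to be the main (though mild) obstacle, since the factor $1/2$ appearing in (\ref{K2}) must be tracked correctly through the multiplication and summation so that the final constant lands on $1/2$ rather than $1/4$.

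Finally, the second inequality follows by elementary termwise estimates: for each $i$ I would use the triangle inequality $|u+v|\le|u|+|v|$ followed by the arithmetic-mean–maximum bound $\tfrac{|u|+|v|}{2}\le\max(|u|,|v|)$, applied with $u=f(\tfrac{3x_i-x_{i+1}}{2})$ and $v=f(\tfrac{3x_{i+1}-x_i}{2})$, and then sum over the partition. No convexity is required for this last step; convexity enters only through Lemma \ref{TT1}, which underlies the first inequality.
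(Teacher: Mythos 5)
Your proposal is correct and follows essentially the same route as the paper: both apply inequality (\ref{K2}) of Lemma \ref{TT1} on each subinterval $[x_i,x_{i+1}]$, sum over the partition with the triangle inequality, and use $E_2(f,d)=\int_a^b f(x)dx-T_2(f,d)$ to identify the left-hand side (the paper writes $\int_a^b f+E_2=\sum_i\{2\int_{x_i}^{x_{i+1}}f-(x_{i+1}-x_i)f(\tfrac{x_i+x_{i+1}}{2})\}$ up front, whereas you carry the factor $\tfrac12$ through and do this algebra at the end — the same computation). Your careful tracking of the constant also quietly fixes a typo in the paper's own proof, where the factor $(x_{i+1}-x_i)$ is missing from the right-hand side of the per-interval bound.
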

\begin{proof}From Theorem \ref{TT1}, we have
$$\left|2\int_{x_i}^{x_{i+1}} f(x)dx-(x_{i+1}-x_i)f\left(\frac{x_i+x_{i+1}}{2}\right)\right|\leq \left|\frac{f\left(\frac{3x_i-x_{i+1}}{2}\right)+f\left(\frac{3x_{i+1}-x_i}{2}\right)}{2}\right|.$$
On the other hand, we have
\begin{equation}
\begin{split}
\left|\int_a^b f(x)dx+\left\{\int_a^b f(x)dx-T_2(f,d)\right\}\right|&=\left|\sum_{i=0}^{m-1}\left\{2\int_{x_i}^{x_{i+1}} f(x)dx-(x_{i+1}-x_i)f\left(\frac{x_i+x_{i+1}}{2}\right)\right\}\right|\\
&\leq\sum_{i=0}^{m-1}(x_{i+1}-x_i)\frac{\left|f\left(\frac{3x_i-x_{i+1}}{2}\right)+f\left(\frac{3x_{i+1}-x_i}{2}\right)\right|}{2}\\
&\leq\sum_{i=1}^{m-1}(x_{i+1}-x_i)\max\left(\left|f\left(\frac{3x_i-x_{i+1}}{2}\right)\right|,\left|f\left(\frac{3x_{i+1}-x_i}{2}\right)\right|\right).
\end{split}
\end{equation}
\end{proof}
\begin{propo}Suppose that the function $|f^\prime|^q,\;q\leq1$, then for every partition of $[a,b]$ the  midpoint  error  satisfies
\begin{equation}
\begin{split}
\left|E_2(f;d)\right|&\leq\min(K_1,K_2)\sum_{i=1}^{m-1}(x_{i+1}-x_i)^2\left[\left|f^\prime\left(\frac{3x_i-x_{i+1}}{2}\right)\right|^q+\left|f^\prime\left(\frac{3x_{i+1}-x_i}{2}\right)\right|^q\right]^{\frac{1}{q}}\\
&\leq2\min(K_1,K_2)\sum_{i=1}^{m-1}(x_{i+1}-x_i)^2\max\left(\left|f^\prime\left(\frac{3x_i-x_{i+1}}{2}\right)\right|,\left|f^\prime\left(\frac{3x_{i+1}-x_i}{2}\right)\right|\right).
\end{split}
\end{equation}
\end{propo}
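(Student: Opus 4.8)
The plan is to apply Corollary~\ref{c1} on each subinterval of the partition and then sum the resulting local bounds. First I would specialize Corollary~\ref{c1} to $[x_i,x_{i+1}]$ by replacing $a$ with $x_i$ and $b$ with $x_{i+1}$, obtaining
\[
\left|\frac{1}{x_{i+1}-x_i}\int_{x_i}^{x_{i+1}}f(x)\,dx-f\left(\frac{x_i+x_{i+1}}{2}\right)\right|\leq\min(K_1,K_2)(x_{i+1}-x_i)\left(\left|f^\prime\left(\frac{3x_i-x_{i+1}}{2}\right)\right|^q+\left|f^\prime\left(\frac{3x_{i+1}-x_i}{2}\right)\right|^q\right)^{1/q}.
\]
Multiplying through by $(x_{i+1}-x_i)$ bounds the local midpoint error $\int_{x_i}^{x_{i+1}}f(x)\,dx-(x_{i+1}-x_i)f\left(\frac{x_i+x_{i+1}}{2}\right)$ by $\min(K_1,K_2)(x_{i+1}-x_i)^2$ times the same bracketed factor raised to the power $1/q$.

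Next, using that the integral splits additively over the partition, I would write
\[
E_2(f,d)=\int_a^b f(x)\,dx-T_2(f,d)=\sum_{i=0}^{m-1}\left[\int_{x_i}^{x_{i+1}}f(x)\,dx-(x_{i+1}-x_i)f\left(\frac{x_i+x_{i+1}}{2}\right)\right],
\]
and then apply the triangle inequality. Inserting the per-interval estimate from the first step yields the first claimed inequality at once.

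For the second inequality I would invoke the elementary estimate $u^q+v^q\leq 2\max(u,v)^q$, valid for $u,v\geq 0$; taking $q$-th roots gives $(u^q+v^q)^{1/q}\leq 2^{1/q}\max(u,v)$, and since $q\geq 1$ one has $2^{1/q}\leq 2$. Applying this termwise to each summand of the first inequality absorbs the extra factor into the constant and produces $2\min(K_1,K_2)$ in front of the sum.

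I do not expect any genuine obstacle, since the argument is a routine interval-splitting followed by the triangle inequality; the only points needing care are the hypothesis, which should read that $|f^\prime|^q$ is convex with $q\geq 1$ (so that Corollary~\ref{c1} applies and the bound $2^{1/q}\leq 2$ holds) rather than $q\leq 1$, and the summation index, which naturally runs from $i=0$.
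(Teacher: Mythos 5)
Your proposal is correct and follows essentially the same route as the paper's own proof: apply Corollary~\ref{c1} on each subinterval $[x_i,x_{i+1}]$, decompose $E_2(f,d)$ as a sum of local midpoint errors, use the triangle inequality, and pass to the max via $(u^q+v^q)^{1/q}\leq 2^{1/q}\max(u,v)\leq 2\max(u,v)$. Your two corrections are also well taken: the hypothesis must indeed read that $|f^\prime|^q$ is convex with $q>1$ (so that $K_1,K_2$ from Corollary~\ref{c1} make sense), and the sum should run from $i=0$, an inconsistency present in the paper's statement and final display as well.
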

\begin{proof}From Corollary \ref{c1}, we obtain
$$\left|\int_{x_i}^{x_{i+1}} f(x)dx-(x_{i+1}-x_i)f\left(\frac{x_i+x_{i+1}}{2}\right)\right|\leq\min(K_1,K_2)(x_{i+1}-x_i)^2\left[\left|f^\prime\left(\frac{3x_i-x_{i+1}}{2}\right)\right|^q+\left|f^\prime\left(\frac{3x_{i+1}-x_i}{2}\right)\right|^q\right]^{\frac{1}{q}}.$$
On the other hand, we have
\begin{equation}
\begin{split}
\left|\int_a^b f(x)dx-T_2(f,d)\right|&=\left|\sum_{i=0}^{m-1}\left\{\int_{x_i}^{x_{i+1}} f(x)dx-(x_{i+1}-x_i)f\left(\frac{x_i+x_{i+1}}{2}\right)\right\}\right|\\
&\leq\min(K_1,K_2)\sum_{i=0}^{m-1}(x_{i+1}-x_i)^2\left[\left|f^\prime\left(\frac{3x_i-x_{i+1}}{2}\right)\right|^q+\left|f^\prime\left(\frac{3x_{i+1}-x_i}{2}\right)\right|^q\right]^{\frac{1}{q}}\\
&\leq2\min(K_1,K_2)\sum_{i=1}^{m-1}(x_{i+1}-x_i)^2\max\left(\left|f^\prime\left(\frac{3x_i-x_{i+1}}{2}\right)\right|,\left|f^\prime\left(\frac{3x_{i+1}-x_i}{2}\right)\right|\right).
\end{split}
\end{equation}
\end{proof}
\subsection{Inequalities for some special functions}

\subsubsection{Modified Bessel functions}

Recall that the modified Bessel function $I_p$ of the first kind has the series representation [\cite{wat}, p. 77]
$$I_p(x)=\sum_{n\geq0}\frac{(x/2)^{p+2n}}{n!\Gamma(p+n+1)},$$
where $x\in\mathbb{R},$ while the modified Bessel function of the second kind $K_p$ [\cite{wat}, p. 78]
is usually defined as
$$K_p(x)=\frac{\pi}{2}\frac{I_{-p}(x)+I_p(x)}{\sin p\pi}.$$
For this we consider the function $\mathcal{I}_{p}:\mathbb{R}\longrightarrow[1,\infty),$  defined by
\begin{equation*}
\mathcal{I}_{p}(x)=2^p\Gamma(p+1)x^{-\nu}I_{p}(x),
\end{equation*}
where $\Gamma(.)$ is the gamma function.
\begin{propo}Let $p>-1,\;a,b\in\mathbb{R}$ such that $0<a<b$, then the following inequality holds
\begin{equation}\label{I1}
\left|\frac{\mathcal{I}_p(b)-\mathcal{I}_p(b)}{b-a}\right|\leq\frac{\left[(\frac{3a-b}{2})\mathcal{I}_{p+1}(\frac{3a-b}{2})+(\frac{3b-a}{2})\mathcal{I}_{p+1}(\frac{3b-a}{2})\right]+(a+b)\mathcal{I}_{p+1}(\frac{a+b}{2})}{8(p+1)}.
\end{equation}
In particular, the following inequality
\begin{equation}\label{I11}
\left|\frac{\cosh(b)-\cosh(a)}{b-a}\right|\leq\frac{\sinh\left(\frac{3a-b}{2}\right)+\sinh\left(\frac{3b-a}{2}\right)+2\sinh\left(\frac{a+b}{2}\right)}{4}.
\end{equation}
is true.
\end{propo}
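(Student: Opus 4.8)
The driving identity will be a differentiation formula for $\mathcal{I}_p$. Starting either from the series or from the standard recurrence $\frac{d}{dx}\left[x^{-p}I_p(x)\right]=x^{-p}I_{p+1}(x)$, together with $\Gamma(p+2)=(p+1)\Gamma(p+1)$, I would first establish
$$\mathcal{I}_p'(x)=\frac{x\,\mathcal{I}_{p+1}(x)}{2(p+1)}.$$
This is the bridge that converts every statement about $\mathcal{I}_p$ into one about $\mathcal{I}_{p+1}$, and it is what makes the three evaluation points $\frac{3a-b}{2},\frac{3b-a}{2},\frac{a+b}{2}$ appear on the right-hand side with the correct weights.

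Next, set $g:=\mathcal{I}_p'$ and note, by the fundamental theorem of calculus, that
$$\frac{\mathcal{I}_p(b)-\mathcal{I}_p(a)}{b-a}=\frac{1}{b-a}\int_a^b g(x)\,dx.$$
I would then apply the right-hand inequality of (\ref{K1}) in Lemma \ref{TT1} to $g$, producing the bound $\tfrac14\bigl[2g(\tfrac{a+b}{2})+g(\tfrac{3b-a}{2})+g(\tfrac{3a-b}{2})\bigr]$. Substituting $g(y)=\frac{y\,\mathcal{I}_{p+1}(y)}{2(p+1)}$ and collecting the common factor $\frac{1}{8(p+1)}$ reproduces the claimed right-hand side verbatim: the term $2g(\tfrac{a+b}{2})$ yields $\frac{(a+b)\mathcal{I}_{p+1}(\frac{a+b}{2})}{8(p+1)}$, while $g(\tfrac{3b-a}{2})$ and $g(\tfrac{3a-b}{2})$ supply the two remaining summands. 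Since $g>0$ on the relevant range, the quotient is positive and the absolute value on the left is harmless.

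The step that needs care — the main obstacle — is verifying that $g=\mathcal{I}_p'$ is convex on $[\frac{3a-b}{2},\frac{3b-a}{2}]$, which is exactly the hypothesis Lemma \ref{TT1} demands. Using the representation $\mathcal{I}_p(x)=\sum_{n\ge0}\frac{(x^2/4)^n}{(p+1)_n\,n!}$, whose coefficients are all positive for $p>-1$, the third derivative $g''=\mathcal{I}_p'''$ is a power series in odd powers of $x$ with positive coefficients; hence $g''>0$ and $g$ is convex precisely on $(0,\infty)$. This forces the interval $[\frac{3a-b}{2},\frac{3b-a}{2}]$ to stay positive, i.e. one needs the left endpoint $\frac{3a-b}{2}>0$; for negative arguments $\mathcal{I}_p'''$ changes sign and convexity is lost, so this positivity is precisely the condition that must be in force when invoking the lemma.

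Finally, the particular inequality (\ref{I11}) is simply the instance $p=-\tfrac12$. There one has $\mathcal{I}_{-1/2}(x)=\cosh x$ and $\mathcal{I}_{1/2}(x)=\frac{\sinh x}{x}$, so each product $y\,\mathcal{I}_{1/2}(y)$ collapses to $\sinh y$ and the denominator becomes $8(p+1)=4$; the general bound then reduces at once to the stated estimate for $\frac{\cosh b-\cosh a}{b-a}$. I expect the only genuinely delicate point in the whole argument to be the convexity verification above, since the derivative identity and the final substitution are routine once the formula for $\mathcal{I}_p'$ is in hand.
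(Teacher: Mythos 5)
Your proposal is correct and follows essentially the same route as the paper: apply Lemma \ref{TT1} to the convex function $g=\mathcal{I}_p^\prime$ (whose mean value over $[a,b]$ is the difference quotient by the fundamental theorem of calculus), rewrite the three boundary terms via the differentiation formula $\mathcal{I}_p^\prime(x)=\frac{x}{2(p+1)}\mathcal{I}_{p+1}(x)$, and specialize to $p=-\frac{1}{2}$ using $\mathcal{I}_{-1/2}(x)=\cosh x$, $\mathcal{I}_{1/2}(x)=\sinh(x)/x$ to get the $\cosh$--$\sinh$ inequality. Your treatment is in fact more careful than the paper's: the paper only observes that $\mathcal{I}_p^\prime$ is convex on $[0,\infty)$ and never checks that Lemma \ref{TT1} requires convexity on the whole interval $[\frac{3a-b}{2},\frac{3b-a}{2}]$, so your observation that the implicit hypothesis $3a\geq b$ (positivity of the left endpoint, without which $\mathcal{I}_p^{\prime\prime\prime}$ changes sign and convexity fails) must be in force is a genuine correction to the statement as printed.
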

\begin{proof}Observe that the function $x\mapsto\mathcal{I}_p^\prime(x)$ is convex on $[0,\infty),$ since as power series has only positive coefficients. Now, from Theorem \ref{TT1} we obtain
\begin{equation}
\left|\frac{\mathcal{I}_p(b)-\mathcal{I}_p(b)}{b-a}\right|\leq\frac{2\mathcal{I}_p^\prime\left(\frac{a+b}{2}\right)+\mathcal{I}_p^\prime\left(\frac{3a-b}{2}\right)+\mathcal{I}_p^\prime\left(\frac{3b-a}{2}\right)}{4}.
\end{equation}
By using the differentiation formula [\cite{wat}, p. 79]
\begin{equation}\label{mm}
\mathcal{I}_{p}^{\prime}(x)=\frac{x}{2(p+1)}\mathcal{I}_{p+1}(x)
\end{equation}
we deduce that the inequality (\ref{I1}) holds. Now taking into account the relations $\mathcal{I}_{-1/2}(x)=\cosh(x)$ and $\mathcal{I}_{1/2}(x)=\sinh(x)/x,$ the inequality (\ref{I1})  reduce to inequality (\ref{I11}).
\end{proof}
\begin{propo}Let $a,b\in\mathbb{R}$ such that $0<a<b.$ Suppose that $3a\neq b,$ then the following inequality holds true
\begin{equation}\label{II}
\left|\frac{a^pK_p(b)-b^pK(a)}{(ab)^p(b-a)}\right|\leq \frac{F_{p}(a,b)}{[(a+b)(3a-b)(3b-a)]^p},
\end{equation}
for all $p>1$, where
$$F_p(a,b)=2^{p+1}[(3a-b)(3b-a)]^pK_{p+1}(\frac{a+b}{2})+[2(a+b)(3b-a)]^pK_{p+1}(\frac{3a-b}{2})+[2(a+b)(3a-b)]^pK_{p+1}(\frac{3b-a}{2}).$$
\end{propo}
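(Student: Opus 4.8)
The plan is to mimic the proof of the preceding proposition for $\mathcal{I}_p$: reduce the estimate to Lemma~\ref{TT1} applied to a suitable normalisation of $K_p$. Set $g(x)=x^{-p}K_p(x)$ on $(0,\infty)$. Then
\[
g(b)-g(a)=b^{-p}K_p(b)-a^{-p}K_p(a)=\frac{a^pK_p(b)-b^pK_p(a)}{(ab)^p},
\]
so the quantity to be bounded is precisely $\left|\dfrac{g(b)-g(a)}{b-a}\right|=\left|\dfrac{1}{b-a}\int_a^b g'(x)\,dx\right|$. By the differentiation formula $\dfrac{d}{dx}\!\left(x^{-p}K_p(x)\right)=-x^{-p}K_{p+1}(x)$ recorded in \cite{wat}, I would set $h(x)=x^{-p}K_{p+1}(x)$, so that $g'=-h$ and $h>0$ on $(0,\infty)$.

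Next I would apply Lemma~\ref{TT1} to the positive function $h$ on $[a,b]$. Because $h>0$, the average $\dfrac{1}{b-a}\int_a^b h(x)\,dx=-\dfrac{g(b)-g(a)}{b-a}$ is itself positive and hence equals $\left|\dfrac{g(b)-g(a)}{b-a}\right|$. The right-hand inequality in (\ref{K1}) then gives
\[
\left|\frac{g(b)-g(a)}{b-a}\right|\le \frac{2h\!\left(\frac{a+b}{2}\right)+h\!\left(\frac{3a-b}{2}\right)+h\!\left(\frac{3b-a}{2}\right)}{4}.
\]
Writing out $h(x)=x^{-p}K_{p+1}(x)$ at the three abscissae, clearing the common denominator $[(a+b)(3a-b)(3b-a)]^p$ and identifying the three resulting numerators with the summands of $F_p(a,b)$ produces the asserted bound. (This route in fact yields the sharper constant $\dfrac{F_p(a,b)}{4[(a+b)(3a-b)(3b-a)]^p}$; since that right-hand side is four times smaller, the inequality as displayed follows a fortiori, the factor $\tfrac14$ having apparently been dropped.)

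The crux, and the step I expect to cost the most work, is the convexity of $h(x)=x^{-p}K_{p+1}(x)$ on $[\frac{3a-b}{2},\frac{3b-a}{2}]$, which is what makes (\ref{K1}) applicable. Unlike $I_p$, the function $K_{p+1}$ has no power series with nonnegative coefficients, so the positive-coefficients argument used before is unavailable. I would instead factor $h=u\,v$ with $u(x)=x^{-p}$ and $v(x)=K_{p+1}(x)$ and verify that both factors are positive, decreasing and convex on $(0,\infty)$: for $u$ this is clear when $p>0$, and for $v$ it follows from $K_{p+1}>0$, $K_{p+1}'<0$ together with the modified Bessel equation $x^2K_{p+1}''+xK_{p+1}'-(x^2+(p+1)^2)K_{p+1}=0$, which rearranges to $K_{p+1}''=\dfrac{(x^2+(p+1)^2)K_{p+1}-xK_{p+1}'}{x^2}>0$. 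For two positive, decreasing, convex functions one has $(uv)''=u''v+2u'v'+uv''\ge 0$ with every summand nonnegative, so $h$ is convex. (An alternative is the Macdonald integral representation $x^{-(p+1)}K_{p+1}(x)=c_p\int_1^\infty e^{-xt}(t^2-1)^{p+1/2}\,dt$, valid for $p>-\tfrac32$.)

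Finally I would record the domain requirement hidden in the statement: the three evaluation points must lie in $(0,\infty)$ for $K_{p+1}$ to be defined there and for the convexity argument to apply, i.e.\ $\frac{3a-b}{2}>0$. Thus the hypothesis should read $3a>b$ rather than merely $3a\neq b$; granting this together with $0<a<b$ and $p>1$, all the monotonicity and convexity facts invoked above hold, and the proof is complete.
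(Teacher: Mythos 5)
Your proof is correct and follows the same skeleton as the paper's: both reduce the difference quotient of $g(x)=x^{-p}K_p(x)$ to an average of $h(x)=x^{-p}K_{p+1}(x)$ via the formula $(x^{-p}K_p(x))'=-x^{-p}K_{p+1}(x)$, and then invoke the right-hand inequality of Lemma~\ref{TT1} for the convex function $h$. Where you genuinely diverge is the justification of convexity. The paper argues through complete monotonicity: the integral representation $K_p(x)=\int_0^\infty e^{-x\cosh t}\cosh(pt)\,dt$ shows $K_p$ is completely monotonic, the product of completely monotonic functions ($x^{-p}$ and $K_p$) is completely monotonic, and $h=-(x^{-p}K_p)'$ is therefore completely monotonic, hence convex. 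You instead prove convexity elementarily, splitting $h=uv$ with $u=x^{-p}$, $v=K_{p+1}$, checking each factor is positive, decreasing and convex (the convexity of $K_{p+1}$ coming from the modified Bessel equation together with $K_{p+1}>0$, $K_{p+1}'<0$), and using $(uv)''=u''v+2u'v'+uv''\geq 0$. Your route avoids any appeal to the theory of completely monotonic functions and is self-contained modulo standard Bessel facts; the paper's route is shorter given that machinery and yields the stronger structural conclusion that $h$ is completely monotonic, not merely convex. Two of your side remarks are also improvements on the paper: first, Lemma~\ref{TT1} really gives the bound $\frac{F_p(a,b)}{4[(a+b)(3a-b)(3b-a)]^p}$, so the displayed inequality (missing the factor $\tfrac14$) holds a fortiori --- the paper silently discards this factor; second, the hypothesis $3a\neq b$ is indeed insufficient, since the evaluation point $\frac{3a-b}{2}$ and the convexity interval $[\frac{3a-b}{2},\frac{3b-a}{2}]$ must lie in $(0,\infty)$, so $3a>b$ is what is actually required; the paper's proof tacitly assumes this as well.
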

\begin{proof}Using the integral representation \rm{\cite{wat}, p. 181}
\[
K_{p}(x)=\int_{0}^{\infty}e^{-x\cosh t}\cosh(pt)dt,\, x>0\]
where $p\in\mathbb{R},$we conclude that the function $x\longmapsto K_{p}(x)$
is scompletely monotonic on $(0,\infty)$ for each $p\in\mathbb{R}.$ By the Leibniz formula for derivatives the product of two completely monotonic functions
is completely monotonic, we conclude that the function $x\mapsto\frac{K_{p}(x)}{x^{p}}$
is strictly completely monotonic on $(0,\infty)$ for all $p>1.$ Now, let $f_p(x)=-(\frac{K_{p}(x)}{x^{p}})^\prime,$ so the function $f_p(x)$ is completely monotonic on $(0,\infty)$ for all $p>1,$ and consequently is convex. Using Theorem \ref{TT1} and the fact that [\cite{wat}, p. 79] $(\frac{K_{p}(x)}{x^{p}})^\prime=-\frac{K_{p+1}(x)}{x^{p}}$, we conclude that the inequality (\ref{II}) is holds for all $p>1.$
\end{proof}
\subsection{$q-$digamma function}
Let $0<q<1$, the $q-$digamma function $\psi_q$, is the  $q-$analogue  of  the  psi  or  digamma  function $\psi$ defined by
\begin{equation}\label{int}
\begin{split}
\psi_q(x)
&=-\ln(1-q)+\ln q \sum_{k=0}^{\infty}\frac{q^{k+x}}{1-q^{k+x}}\\
&=-\ln(1-q)+\ln q \sum_{k=1}^{\infty}\frac{q^{kx}}{1-q^{k}}.
\end{split}
\end{equation}

For $q>1$ and $x>0$, the $q-$digamma function $\psi_q$ is defined by
\begin{equation*}
\begin{split}
\psi_q(x)&=-\ln(q-1)+\ln q\left[x-\frac{1}{2}-\sum_{k=0}^{\infty}\frac{q^{-(k+x)}}{1-q^{-(k+x)}}\right]\\
&=-\ln(q-1)+\ln q\left[x-\frac{1}{2}-\sum_{k=1}^{\infty}\frac{q^{-kx}}{1-q^{-kx}}\right].
\end{split}
\end{equation*}

\begin{propo}For $a,b\in \mathbb{R}$, such that $0<a<b$. Then the following inequality
\begin{equation}
\left|\frac{\psi_q(b)-\psi_q(b)}{b-a}\right|\leq \frac{\left[\psi_q^\prime\left(\left|\frac{3a-b}{2}\right)\right|+\psi_q^\prime\left(\frac{3b-a}{2}\right)+2\psi_q^\prime\left(\frac{a+b}{2}\right)\right]}{4}
\end{equation}
holds true for all $q>0.$
\end{propo}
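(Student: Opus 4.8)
The plan is to recognize the left-hand side as an average of $\psi_q'$ and then apply the right-hand estimate in Lemma~\ref{TT1} to the function $\psi_q'$ itself. First I would use the fundamental theorem of calculus to write
$$\frac{\psi_q(b)-\psi_q(a)}{b-a}=\frac{1}{b-a}\int_a^b\psi_q'(x)\,dx$$
(reading the misprinted numerator $\psi_q(b)-\psi_q(b)$ as $\psi_q(b)-\psi_q(a)$). With $f=\psi_q'$, the second inequality in \eqref{K1} reads
$$\frac{1}{b-a}\int_a^b\psi_q'(x)\,dx\le\frac{2\psi_q'\!\left(\tfrac{a+b}{2}\right)+\psi_q'\!\left(\tfrac{3b-a}{2}\right)+\psi_q'\!\left(\tfrac{3a-b}{2}\right)}{4},$$
which is exactly the asserted bound. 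Hence the whole proposition reduces to checking that $\psi_q'$ satisfies the hypotheses of Lemma~\ref{TT1}, namely that it is convex (and, to dispose of the absolute value on the left, positive).

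To verify convexity I would argue directly from the series \eqref{int}. For $0<q<1$, differentiating term by term gives
$$\psi_q'(x)=(\ln q)^2\sum_{k=1}^{\infty}\frac{k\,q^{kx}}{1-q^{k}},$$
in which every coefficient $\tfrac{k(\ln q)^2}{1-q^{k}}$ is positive (because $1-q^{k}>0$) and every factor $q^{kx}=e^{k(\ln q)x}$ is a convex, positive function of $x$. A locally uniformly convergent sum of convex functions is convex, so $\psi_q'$ is convex, and the same expression shows $\psi_q'>0$. For $q>1$ the analogous differentiation of the second representation yields $\psi_q'(x)=\ln q+(\ln q)^2\sum_{k\ge1}\tfrac{k\,q^{-kx}}{1-q^{-k}}$, once more a positive constant plus a positive combination of the convex exponentials $q^{-kx}$; thus $\psi_q'$ is convex and positive for every $q>0$. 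Feeding this into the computation above, positivity makes the integral average nonnegative, so the modulus on the left is harmless and the inequality follows.

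The step I expect to cause the only real difficulty is not analytic depth but the domain of definition. Since $\psi_q$ lives on $(0,\infty)$, the argument requires the entire interval $[\tfrac{3a-b}{2},\tfrac{3b-a}{2}]$ to sit inside $(0,\infty)$; but $0<a<b$ permits $\tfrac{3a-b}{2}<0$ as soon as $b>3a$. The statement should therefore be read with the implicit restriction $3a>b$ (consistent with the absolute value $|\tfrac{3a-b}{2}|$ already appearing on its right-hand side), and the term-by-term differentiation used above must be justified by the local uniform convergence of the differentiated series on compact subsets of that interval. Once these bookkeeping points are settled, Lemma~\ref{TT1} delivers the result immediately.
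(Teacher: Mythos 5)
Your proposal is correct and follows essentially the same route as the paper: both reduce the left-hand side to $\frac{1}{b-a}\int_a^b\psi_q'(x)\,dx$ and apply Lemma~\ref{TT1} to $f=\psi_q'$, the only difference being that the paper infers convexity of $\psi_q'$ from its complete monotonicity while you verify it directly from the series---which is the same underlying computation. Your extra observation that the argument implicitly requires $\frac{3a-b}{2}>0$ (i.e.\ $b<3a$) for $\psi_q'$ to be defined at the endpoints is a legitimate gap in the paper's statement, but it does not alter the method.
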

\begin{proof} By using the definitions of the function $\psi_q(x)$ we deduce that the function $x\mapsto \psi_q^\prime(x)$ is completely monotonic on $(0,\infty)$ for all $q>0$, and consequently the function $x\mapsto \psi_q^\prime(x)$ is convex on $(0,\infty)$. Now applying Theorem \ref{TT1} we deduce that the inequality is valid for all $q>0.$
\end{proof}

\begin{propo}For $a,b\in \mathbb{R}$, such that $0<a<b$. Then the following inequality
\begin{equation}
\left|\frac{\psi_q(b)-\psi_q(b)}{b-a}-\frac{\left[\psi_q^\prime\left(\left|\frac{3a-b}{2}\right)\right|+\psi_q^\prime\left(\frac{3b-a}{2}\right)+2\psi_q^\prime\left(\frac{a+b}{2}\right)\right]}{4}\right|\leq
\frac{(b-a)^2\left[\psi_q^{(3)}\left(\left|\frac{3a-b}{2}\right|\right)+\psi_q^{(3)}\left(\frac{3b-a}{2}\right)\right]}{6},
\end{equation}
holds true for all $q>0.$
\end{propo}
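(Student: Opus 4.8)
The plan is to recognise the claimed estimate as the special case of Theorem \ref{TTTT1} in which the exponent parameter equals $1$ and the theorem is applied to the function $f=\psi_q^{\prime}$ (not to $\psi_q$ itself). With this choice the fundamental theorem of calculus gives $\frac{1}{b-a}\int_a^b\psi_q^{\prime}(x)\,dx=\frac{\psi_q(b)-\psi_q(a)}{b-a}$, which is the leading term on the left-hand side (the expression $\psi_q(b)-\psi_q(b)$ in the statement being a typographical slip for $\psi_q(b)-\psi_q(a)$), while $\frac{f(\frac{3a-b}{2})+f(\frac{3b-a}{2})+2f(\frac{a+b}{2})}{4}$ is exactly the subtracted average $\frac{\psi_q^{\prime}(\frac{3a-b}{2})+\psi_q^{\prime}(\frac{3b-a}{2})+2\psi_q^{\prime}(\frac{a+b}{2})}{4}$. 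Setting the exponent to $1$ reduces the right-hand side of Theorem \ref{TTTT1}, namely $\frac{(b-a)^2}{3}\bigl(\frac{|f^{\prime\prime}(\frac{3a-b}{2})|+|f^{\prime\prime}(\frac{3b-a}{2})|}{2}\bigr)$, to $\frac{(b-a)^2(|\psi_q^{(3)}(\frac{3a-b}{2})|+|\psi_q^{(3)}(\frac{3b-a}{2})|)}{6}$, since $f^{\prime\prime}=\psi_q^{\prime\prime\prime}=\psi_q^{(3)}$. Thus, once the hypotheses are checked, the inequality is immediate.

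First I would verify the hypotheses of Theorem \ref{TTTT1}. Smoothness of $\psi_q$ on $(0,\infty)$ makes $f=\psi_q^{\prime}$ differentiable with continuous second derivative on any compact subinterval, so the only genuine condition to check is that $|f^{\prime\prime}|^{1}=|\psi_q^{(3)}|$ be convex. Differentiating the series $\psi_q(x)=-\ln(1-q)+\ln q\sum_{k\geq1}\frac{q^{kx}}{1-q^{k}}$ term by term (for $0<q<1$; the case $q>1$ is handled identically from the second representation) yields $\psi_q^{\prime}(x)=(\ln q)^2\sum_{k\geq1}\frac{k\,q^{kx}}{1-q^{k}}$. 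Because $0<q<1$, each $q^{kx}=e^{kx\ln q}$ is a positive, decaying exponential and $1-q^{k}>0$, so $\psi_q^{\prime}$ is a nonnegative linear combination of completely monotonic functions and is therefore itself completely monotonic on $(0,\infty)$; this is precisely the fact already invoked in the preceding Proposition.

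The key step is the elementary observation that the second derivative of a completely monotonic function is again completely monotonic: if $g$ satisfies $(-1)^{n}g^{(n)}\geq0$ for all $n$, then $(-1)^{n}(g^{\prime\prime})^{(n)}=(-1)^{n+2}g^{(n+2)}\geq0$. Taking $g=\psi_q^{\prime}$ shows that $\psi_q^{(3)}=(\psi_q^{\prime})^{\prime\prime}$ is completely monotonic, hence nonnegative and convex (its own second derivative $\psi_q^{(5)}$ is nonnegative). Consequently $|\psi_q^{(3)}|=\psi_q^{(3)}$ is convex, every hypothesis of Theorem \ref{TTTT1} is met, and applying that theorem with exponent $1$ and $f=\psi_q^{\prime}$ delivers the stated bound.

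The main obstacle is a domain subtlety concealed in the absolute value $|\frac{3a-b}{2}|$ in the statement. The interval $[\frac{3a-b}{2},\frac{3b-a}{2}]$ on which Theorem \ref{TTTT1} must be applied can leave $(0,\infty)$ when $b>3a$, yet $\psi_q$ and its derivatives live only on $(0,\infty)$, where the complete monotonicity used above is valid. One therefore either restricts to $3a\geq b$, so the whole interval lies in $[0,\infty)$, or works on $[|\frac{3a-b}{2}|,\frac{3b-a}{2}]$ and evaluates the first boundary term at $|\frac{3a-b}{2}|$, which is exactly why the statement carries $\psi_q^{(3)}(|\frac{3a-b}{2}|)$. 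Beyond this bookkeeping the proof is a direct specialization of Theorem \ref{TTTT1}, the only substantive ingredient being the convexity of $\psi_q^{(3)}$ derived from complete monotonicity.
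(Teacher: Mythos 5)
Your argument is correct and is essentially the paper's own proof: the paper likewise sets $f=\psi_q^{\prime}$, notes that $f^{\prime\prime}=\psi_q^{(3)}$ is completely monotonic (hence $|f^{\prime\prime}|$ is convex and nonnegative), and applies Theorem~\ref{TTTT1} (cited there as ``Theorem 4'' owing to the paper's shifted internal numbering) with exponent $q=1$, whose constant $\frac{(b-a)^2}{3}\cdot\frac{1}{2}$ produces the factor $\frac{(b-a)^2}{6}$. Your extra steps---verifying the convexity of $\psi_q^{(3)}$ from complete monotonicity and flagging the domain problem when $b>3a$---merely make explicit what the paper's two-line proof leaves unsaid.
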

\begin{proof}We set $f=\psi_q^\prime,$ thus the function $f^{\prime\prime}=\psi_q^{(3)}$ is completely monotonic on $(0,\infty)$ for all $q>0.$ Apply Theorem 4 we obtain the desired inequality.
\end{proof}
\vskip 3mm
\textbf{{Concluding Remarks:}}
Lastly, we conclude this paper by remarking that, we have
obtained new type integral
inequalities for convex functions and some of their applications.
All the inequalities are interesting and important in the field of integral inequalities. 




\begin{thebibliography}{99}
\bibitem{D}\textsc{M. Alomari, M. Darus, S. S. Dragomir}, New inequalities of Hermite?adamard type for functions whose second derivates absolute values are quasi-convex, RGMIA Res. Rep. Coll. 12 (2009) Supplement, Article 14, online: http://www.staff.vu.edu.au/RGMIA/v12(E).asp.
\bibitem{M} \textsc{D. S. Mitrinovi\'c, I. B. Lackovi\'c}, Hermite and convexity, Aequationes Math. 28
 (1) (1985) 229--232.
\bibitem{U}\textsc{ U. S. Kirmaci, M. E. Ozdemir}, Some inequalities for mappings whose
derivatives are bounded and applications to special means of real numbers, Appl. Math. Letters, 17 (2004), 641--645.
Math. 30 (1906), 175--193.
\bibitem{wat}\textsc{G. N. Watson}, A Treatise on the Theory of Bessel Functions, Cambridge University Press, 1944.
\end{thebibliography}
\end{document}